\theoremstyle{plain}
\newtheorem{prop}{Proposition}[section]
\newtheorem{thm}[prop]{Theorem}
\newtheorem{lemma}[prop]{Lemma}
\theoremstyle{definition}
\newtheorem*{defi}{Definition}
\newtheorem*{notation}{Notation}
\theoremstyle{remark}
\newtheorem{remark}{Remark}
\numberwithin{table}{section}
\DeclareMathOperator{\Frob}{Frob}
\DeclareMathOperator{\Gal}{Gal}
\DeclareMathOperator{\im}{Im}
\DeclareMathOperator{\identity}{id}
\newcommand{\bfP}{\mathcal P}
\newcommand{\E}{\mathcal{E}}
\newcommand{\Om}{{\mathscr{O}}}
\newcommand{\Disc}{\Delta}
\newcommand{\GL}{{\rm GL}}
\newcommand{\GSp}{{\rm GSp}}
\def\ZZ{\mathbb Z}
\def\FF{\mathbb F}
\def\QQ{\mathbb Q}
\def\AA{\mathbb A}
\def\CC{\mathbb C}
\def\<#1>{{\left\langle{#1}\right\rangle}}
\def\Z{{\mathbb Z}}             
\def\Q{{\mathbb Q}}             
\def\set#1{{\left\{{\def\st{\;:\;}#1}\right\}}}
\def\O{{R}}           
\def\Ol(#1){{\mathop{\O_l}(\id{#1})}}                 
\def\Or(#1){{\mathop{\O_r}(\id{#1})}}                 
\def\Orp(#1){{\mathop{\O_r}(\idp{#1})}}               
\def\Otern(#1){{\mathop{\O^0_r}(\id{a})}}    
\def\id#1{{\mathfrak{#1}}}      
\DeclareMathOperator{\trace}{{\mathrm{Tr}}}
\begin{document}

\title[Modularity over imaginary quadratic fields]{Proving modularity for a given elliptic
  curve over an imaginary quadratic field}


\author{Luis Dieulefait}
\address{Departament d'Àlgebra i Geometria, Facultat de Matemàtiques,
  Universitat de \break Barcelona, Gran Via de les Corts Catalanes,
  585. 08007 Barcelona} 
\email{ldieulefait@ub.edu}
\author{Lucio Guerberoff}
\address{Departamento de Matem\'atica, Universidad de Buenos Aires,
  Pabell\'on I, Ciudad Universitaria. C.P:1428, Buenos Aires,
  Argentina - Institut de Mathématiques de Jussieu, Université Paris
  7, Denis Diderot, 2, place Jussieu, F-75251 PARIS CEDEX 05 France.}
\email{lguerb@dm.uba.ar}
\thanks{The second author was supported by a CONICET fellowship}
\author{Ariel Pacetti}
\address{Departamento de Matem\'atica, Universidad de Buenos Aires,
         Pabell\'on I, Ciudad Universitaria. C.P:1428, Buenos Aires, Argentina}
\email{apacetti@dm.uba.ar}
\thanks{The third author was partially suported by PICT 2006-00312 and
UBACyT X867}
\keywords{Elliptic Curves Modularity}
\subjclass[2000]{Primary: 11G05; Secondary:11F80}

\begin{abstract}
We present an algorithm to determine if the $L$-series associated to
an automorphic representation and the one associated to an elliptic
curve over an imaginary quadratic field agree. By the work of
Harris-Soudry-Taylor, Taylor and Berger-Harcos
(cf. \cite{harris-taylor}, \cite{taylorII} and \cite{berger-harcos})
we can associate to an automorphic representation a family of
compatible $p$-adic representations. Our algorithm is based on
Faltings-Serre's method to prove that $p$-adic Galois representations
are isomorphic.
\end{abstract}

\maketitle

\section{Introduction}

Modularity for rational elliptic curves was one of the biggest
achievements of last century. Little is known for general number
fields. In the case of totally real number fields some techniques do
apply, but the result is far from being proven. The case of not
totally real fields is more intractable to Taylor-Wiles machinery. In
this paper we present an algorithm to determine if the $L$-series
associated to an automorphic representation and the one associated to
an elliptic curve over an imaginary quadratic field agree or not. The
algorithm is based on Faltings-Serre's method to prove isomorphism of
$p$-adic Galois representation. By the work of Harris-Soudry-Taylor,
Taylor and Berger-Harcos (cf. \cite{harris-taylor}, \cite{taylorII}
and \cite{berger-harcos}) we can associate to an automorphic
representation a family of compatible $p$-adic representations, and an
elliptic curve has such a family of representations as well in the
natural way.

The paper is organized as follows: on the first section we present the
algorithms (which depend on the residual representations). On the
second section we review the results of $p$-adic representations
attached to automorphic forms on imaginary quadratic fields. On the
third section we explain Falting-Serre's method on Galois
representations. On the fourth section we prove that the algorithm
gives the right answer. At last we show some examples and some GP code
writen for the examples.

\section{Algorithm}

Let $K$ be an imaginary quadratic field, $\E$ be an elliptic curve over
$K$ and $f$ an automorphic form on $\GL_2(\AA_K)$ whose $L$-series we
want to compare. This algorithm answers if the $2$-adic Galois
representations attached to both objects are isomorphic and if the
original $L$-series are equal. Since these Galois representations come
in compatible families, in particular the algorithm determines whether
the $p$-adic Galois representations are isomorphic or not for any
prime $p$. It depends on the residual image of the elliptic curve
representation.

The input in all cases is: $K$, $\E$, $\id{n}(\E)$ (the conductor of
$\E$), $\id{n}(f)$ (the level of $f$) and $a_{\id{p}}(f)$ for some prime
ideals $\id{p}$ to be determined. By $L_\E$ we denote the field
obtained from $K$ by adding the coordinates of the $2$-torsion points
of $\E$.

\begin{notation} By $\bar \QQ$ we denote an algebraic closure of
  $\QQ$. Let $K$ be an imaginary quadratic extension of $\QQ$, and
  $\alpha$ an element of $K$. By $\bar \alpha$ we denote conjugate of
  $\alpha$. 

Let $L/K$ be field extensions and $\id{p} \subset \Om_K$. For $\id{q}
  \subset \Om_L$ a prime ideal above $\id{p}$, we denote
  $e(\id{q}|\id{p})$ the ramification index.
\end{notation}

\subsection{Residual image isomorphic to $S_3$}
\label{s3-case}
\begin{enumerate}
\item Let $\id{m}_K \subset \Om_K$ be given by $\id{m}_K:=
  \prod_{\id{p} \mid 2 \id{n}(\E) \id{n}(f) \overline{\id{n}(f)}\Disc(K)}
\id{p}^{e(\id{p})}$ 
  where 
\[
e(\id{p}) = \left\{ \begin{array}{cl}
1 & \text{if }\id{p} \nmid 6\\
2e(\id{p}|2)+1 & \text{if }\id{p} \mid 2\\
\left \lfloor \frac{3 e(\id{p}|3)}{2} \right \rfloor +1 & \text{if }\id{p} \mid 3. \end{array} \right.
\]
Compute the ray class group $Cl(\Om_K,\id{m}_K)$.
\item Identify the character $\psi$ corresponding to the unique quadratic
  extension of $K$ contained on $L_\E$ on the computed basis.
\item Extend $\set{\psi}$ to a basis $\set{\psi,\chi_i}_{i=1}^n$ of
the quadratic characters of $Cl(\Om_K,\id{m})$. Compute prime ideals
$\set{\id{p}_j}_{j=1}^{n'}$ with $\id{p}_j \subset \Om_K$, $\id{p}_j
\nmid \id{m}_K$, and with inertial degree $3$ on $L_\E$ such that
\[
\<(\log(\chi_1(\id{p}_j)),\ldots,\log(\chi_n(\id{p}_j)))>_{j=1}^{n'} =(\ZZ/2\ZZ)^n
\]
(where we take any root of the logarithm and identify $\log(\pm1)$ with
$\ZZ/2\ZZ$).
\item If $\trace(\rho_f(\Frob_{\id{p}_j}))$ is odd for $1 \le j \le
  n'$, $\tilde \rho_f$ has image isomorphic to $C_3$ or to $S_3$ with
  the same intermediate quadratic field as $\tilde \rho_\E$. If not,
  end with output ``the two representations are not isomorphic''.
\item Compute a basis $\{\chi_i\}_{i=1}^m$ of cubic characters of
  $Cl(\Om_K,\id{m}_K)$ and a set of ideals $\set{\id{p}_i}_{i=1}^{m'}$ such
  that $\psi(\id{p}_i)=-1$ or $\id{p}_i$ splits completely on $L_\E$ and
\[
\<(\log(\chi_1(\id{p}_j)),\ldots,\log(\chi_m(\id{p}_j)))>_{j=1}^{m'} =(\ZZ/3\ZZ)^m.
\]
\item If $\trace(\rho_f(\Frob_{\id{p}_j}))$ is even for $1 \le j \le m'$, $\tilde
  \rho_f$ has $S_3$ image with the same intermediate quadratic field
  as $\tilde \rho_\E$. If not, end with output ``the two representations are
  not isomorphic''.
\item Let $K_\E$ be the degree two extension of $K$ contained in $L_\E$
  and $\id{m}_{K_\E} \subset \Om_{K_\E}$ be given by $\id{m}_{K_\E}:=
  \prod_{\id{p} \mid 2 \id{n}(\E) \id{n}(f) \overline{\id{n}(f)}\Disc(K)}
\id{p}^{e(\id{p})}$ where
\[
e(\id{p}) = \left\{ \begin{array}{cl}
1 & \text{if }\id{p} \nmid 3\\
\left \lfloor \frac{3 e(\id{p}|3)}{2} \right \rfloor +1 & \text{if }\id{p} \mid 3. \end{array} \right.
\]
Compute the ray class group $Cl(\Om_{K_\E},\id{m}_{K_\E})$.
\item Identify the character $\psi_\E$ corresponding to the cubic
extension $L_\E$ on the computed basis and extend it to a basis
$\set{\psi_\E,\chi_i}_{i=1}^m$ of order three characters of
$Cl(\Om_{K_\E},\id{m}_{K_\E})$. Compute prime ideals
$\set{\id{p}_j}_{j=1}^{m'}$ with $\id{p}_j \subset \Om_K$,
$\psi_\E(\id{p}_j)=1$ and such that
\[
\<(\log(\chi_1(\id{p}_j)),\ldots,\log(\chi_n(\id{p}_j)))>_{j=1}^{m'} =(\ZZ/3\ZZ)^m
\]
(where we take any identification of the cubic roots of unity with
$\ZZ/3\ZZ$). If $\trace(\rho_f(\Frob_{\id{p}_j})) \equiv
\trace(\rho_\E(\Frob_{\id{p}_j})) \pmod 2$
for $1 \le j \le m'$, both residual representations are
isomorphic. If not, end with output ``the two representations are not
isomorphic''.
\item Let $\id{m}_{L_\E}\subset \Om_{L_\E}$ be the modulus
  $\id{m}_{L_\E}= \prod_{\id{q} \mid 2 \id{n}(\E) \id{n}(f)
  \overline{\id{n}(f)}\Disc(K)} \id{q}^{e(\id{q})}$ where
\[
e(\id{p}) = \left\{ \begin{array}{cl}
1 & \text{if }\id{p} \nmid 2\\
2e(\id{p}|2)+1 & \text{if }\id{p} \mid 2. \end{array} \right.
\]
Compute the ray class group $Cl(\Om_{L_\E},\id{m}_{L_\E})$. Let
$\set{\chi_i}_{i=1}^n$ be a basis for its quadratic characters (dual
to the ray class group one computed).
\item Compute the Galois group $\Gal(L_\E/K)$.
\item (Computing invariant subspaces) Let $\sigma$ be an order $3$
  element of $\Gal(L_\E/K)$ and solve the homogeneous system
\[
\left(
\begin{array}{ccc}
\log(\chi_1(\id{a}_1\sigma(\id{a}_1))) & \ldots & \log(\chi_n(\id{a}_1\sigma(\id{a}_1)))\\
\vdots & &\vdots\\
\log(\chi_1(\id{a}_n\sigma(\id{a}_n))) & \ldots & \log(\chi_n(\id{a}_n\sigma(\id{a}_n)))
\end{array} \right)
\]
Denote by $V_{\sigma}$ the kernel.
\item Take $\tau$ an order $2$ element of $\Gal(L/K)$ and compute
  $V_{\tau}$, the kernel of the same system for $\tau$.
\item Intersect $V_{\sigma}$ with $V_{\tau}$. Let $\set{\chi_i}_{i=1}^m$ be a basis of the
  intersection. This gives generators for the $S_3 \times C_2$
  extensions.
\item Compute a set of ideals $\set{\id{p}_i}_{i=1}^{m'}$ with
  $\id{p}_i \subset \Om_K$ and $\id{p}_i \nmid \id{m}_K$ such that 
\[
\<(\log(\chi_1(\tilde{\id{p}}_j)),\ldots,\log(\chi_n(\tilde{\id{p}}_j)))>_{j=1}^{m'} =(\ZZ/2\ZZ)^m,
\]
where $\tilde{\id{p}}_i$ is any ideal of $L_\E$ above $\id{p}_i$.
\item If $\trace(\rho_f(\Frob_{\id{p}_i})) =
  \trace(\rho_\E(\Frob_{\id{p}_i}))$ for $1 \le i \le m$ then the two
  representations agree on order $6$ elements, else end with output
  ``the two representations are not isomorphic''.
\item For $\sigma$ an order three element, solve the homogeneous system
\[
\left(
\begin{array}{ccc}
\log(\chi_1(\id{a}_1\sigma(\id{a}_1)\sigma^2(\id{a}_1))) & \ldots & \log(\chi_n(\id{a}_1\sigma(\id{a}_1)\sigma^2(\id{a}_1)))\\
\vdots & &\vdots\\
\log(\chi_1(\id{a}_n\sigma(\id{a}_n))\sigma^2(\id{a}_n)) & \ldots & \log(\chi_n(\id{a}_n\sigma(\id{a}_n)\sigma^2(\id{a}_n)))
\end{array} \right)
\]
Denote by $W_{\sigma}$ such kernel.
\item Intersect $W_{\sigma}$ with $V_{\tau}$. Let
$\set{\chi_i}_{i=1}^t$ be a basis of such subspace. This characters
give all the $S_4$ extensions.
\item Compute a set of ideals $\set{\id{p}_i}_{i=1}^{t'}$ with
  $\id{p}_i \subset \Om_K$ and $\id{p}_i \nmid \id{m}_K$ such that 
\[
\<(\log(\chi_1(\tilde{\id{p}}_j)),\ldots,\log(\chi_n(\tilde{\id{p}}_j))),\ldots,(\log(\chi_1(\sigma^2(\tilde{\id{p}}_j))),\ldots,\log(\chi_n(\sigma^2(\tilde{\id{p}}_j))))>_{j=1}^{t'}
\]
equals $(\ZZ/2\ZZ)^t$, where $\tilde{\id{p}}_i$ is any ideal of $L_\E$ above
$\id{p}_i$.
\item If $\trace(\rho_f(\id{p}_i)) = \trace(\rho_\E(\id{p}_i))$ for all $1
  \le i \le n$ output ``$\rho_f \simeq \rho_\E$''. If not output ``the
  two representations are not isomorphic''.
\item If $a_{\id{p}}(f)=a_{\id{p}}(\E)$ for $\id{p} \mid
  2\id{n}(\E)\id{n}(f) \overline{\id{n}(f)}\Delta(K)$ then $L(f,s) = L(\E,s)$.
\end{enumerate}

\subsection{Residual image trivial or isomorphic to $C_2$}
\label{trivial-case}
\begin{enumerate}[(1)]
\item Chose prime ideals $\bfP_i$, $i=1,2$ such that $2$ has no
  inertial degree on $\QQ[\alpha_i]$, where $\alpha_i$ is a root of
  $\Frob_{\bfP_i}$. If $\trace(\rho_\E(\Frob_{\bfP_i})) \neq
  \trace(\rho_f(\Frob_{\bfP_i}))$, end with output ``the two
  representations are not isomorphic''.
\item Let $\id{m}_K \subset \Om_K$ be given by $\id{m}_K:=
  \prod_{\id{p} \mid 2 \id{n}(\E) \id{n}(f) \overline{\id{n}(f)}\Disc(K)} \id{p}^{e(\id{p})}$
  where 
\[
e(\id{p}) = \left\{ \begin{array}{cl}
1 & \text{if }\id{p} \nmid 6\\
2e(\id{p}|2)+1 & \text{if }\id{p} \mid 2\\
\left \lfloor \frac{3 e(\id{p}|3)}{2} \right \rfloor +1 & \text{if }\id{p} \mid 3. \end{array} \right.
\]
Compute the ray class group $Cl(\Om_K,\id{m}_K)$.
\item For each index two subgroup of $Cl(\Om_K,\id{m}_K)$ (plus the whole
      group), take the corresponding quadratic (or trivial)
      extension $L$. In $L$, take the
      modulus $\id{m}_L=\prod_{\id{p} \mid 2 \Disc(K)\id{n}(\E) \id{n}(f) \overline{\id{n}(f)}} \id{p}^{e(\id{p})}$, where 
\[
e(\id{p}) = \left\{ \begin{array}{cl}
1 & \text{if }\id{p} \nmid 3\\
\left \lfloor \frac{3 e(\id{p}|3)}{2} \right \rfloor +1 & \text{if
}\id{p} \mid 3. \end{array} \right .
\]
  and compute the ray class group $Cl(\Om_L,\id{m}_L)$.
 \item Compute a set of generators $\{\chi_j\}_{j=1}^n$ for the cubic
      characters of $Cl(\Om_L,\id{m}_L)$, and find prime ideals
      $\{\id{q}_j\}_{j=1}^{n'}$ of $\Om_L$, with
      $\id{q}_j\nmid\id{m}_L$ and such that
      \[ \langle(\log(\chi_1(\id{q}_j)),\dots,\log(\chi_n(\id{q}_j)))\rangle_{j=1}^{n'}=(\Z/3\Z)^n. \]
 \item Consider the collection $\{\id{p}_1,\dots,\id{p}_m\}$ of all
      prime ideals of $\Om_K$ which are below the prime ideals
      found on step (3).
 \item If $\trace(\tilde{\rho}_f(\Frob_{\id{p}_i}))\equiv 0 \pmod 2$
      for each $i=1,\dots,m$, then $\tilde \rho_f$ has image trivial or
      isomorphic to $C_2$. Otherwise, output ``the two representations are not
      isomorphic''.
 \item Compute a basis $\{\chi_i\}_{i=1}^n$ of quadratic characters of
 $Cl(\Om_K,\id{m}_K)$. 
 \item Compute a set of prime ideals $\{\id{p}_{i} \subset \Om_K \, : \,\id{p}_{i}
   \nmid \id{m}_K\}_{i=1}^{2^n-1}$ such that  
\[
\{(\log(\chi_1(\id{p}_{i})), \ldots, \log(\chi_n(\id{p}_{i}))\}_{i=1}^{2^n-1} = (\ZZ/2\ZZ)^n\backslash\{0\}
\]
 \item If $\trace(\rho_f(\Frob_{\id{p}_i}))=\trace(\rho_\E(\Frob_{\id{p}_i}))$
      for $i=1,\dots,2^n-1$, $\rho_\E^{ss} \simeq \rho_f^{ss}$. If not, output ``the two
      representations are not isomorphic''.
\item If $a_{\id{p}}(f)=a_{\id{p}}(\E)$ for $\id{p} \mid
  2\id{n}(\E)\id{n}(f) \overline{\id{n}(f)}\Delta(K)$ then $L(f,s) = L(\E,s)$.
\end{enumerate}

\begin{remark} The algorithm can be slightly improved. In step $(8)$,
  instead of aiming at the whole $C_2^r$, we can stop when we reach a
  \emph{non-cubic} set.

  \begin{defi} Let $V$ be a finite dimensional vector space. A subset $T$
  of $V$ is called \emph{non-cubic} if each homogeneous polynomial 
    on $V$ of degree $3$ that is zero on $T$, is zero on $V$.
  \end{defi}

  In particular, the whole space $V$ is non-cubic. 

\subsection{Residual image isomorphic to $C_3$}
\label{c3-case}

\begin{enumerate}
\item Chose prime ideals $\bfP_i$, $i=1,2$ such that $2$ has no
  inertial degree on $\QQ[\alpha_i]$, where $\alpha_i$ is a root of
  $\Frob_{\bfP_i}$. If $\trace(\rho_\E(\Frob_{\bfP_i})) \neq
  \trace(\rho_f(\Frob_{\bfP_i}))$, end with output ``the two
  representations are not isomorphic''.
\item 
Let $\id{m}_K \subset \Om_K$ be given by 
$
\id{m}_K := \prod_{
  \id{p} \mid 2 \id{n}(\E) \id{n}(f) \overline{\id{n}(f)}\Disc(K)}
  \id{p}^{e(\id{p})} \text{, where }
$
\[
e(\id{p}) = \left\{ \begin{array}{cl}
1 & \text{if }\id{p} \nmid 6\\
2e(\id{p}|2)+1 & \text{if }\id{p} \mid 2\\
\left \lfloor \frac{3 e(\id{p}|3)}{2} \right \rfloor +1 & \text{if }\id{p} \mid 3. \end{array} \right.
\]

Compute the ray class group $Cl(\Om_K,\id{m}_K)$.
\item Identify the character $\psi_\E$ corresponding to the cubic
  Galois extension $L_\E$ on the computed basis.
\item Find a basis $\set{\chi_i}_{i=1}^n$ of the quadratic characters
of $Cl(\Om_K,\id{m}_K)$. Compute prime ideals $\set{\id{p}_j}_{j=1}^{n'}$
with $\id{p}_j \subset \Om_K$, $\id{p}_j \nmid \id{m}$, $\psi(\id{p}_j) \neq
1$ and such that
\[
\<(\log(\chi_1(\id{p}_j)),\ldots,\log(\chi_n(\id{p}_j)))>_{j=1}^{n'} =(\ZZ/2\ZZ)^n
\]
(where we take any root of the logarithm and identify $\log(\pm1)$ with
$\ZZ/2\ZZ$).
\item If $\trace(\rho_f(\Frob_{\id{p}_j}))$ is odd for $1 \le j \le n'$, $\tilde \rho_f$ has
image isomorphic to $C_3$. If not, end with output ``the two representations are
  not isomorphic''.
\item Extend $\{\psi_\E\}$ to a basis $\set{\psi_\E,\chi_i}_{i=1}^{m}$ of
order three characters of $Cl(\Om_K,\id{m}_K)$. Compute prime ideals
$\set{\id{p}_j}_{j=1}^{m'}$ with $\id{p}_j \subset \Om_K$,
$\psi_\E(\id{p}_j)=1$, such that
\[
\<(\log(\chi_1(\id{p}_j)),\ldots,\log(\chi_n(\id{p}_j)))>_{j=1}^{m'} =(\ZZ/3\ZZ)^m
\]
(where we take any root of the logarithm and identify log of the cubic
roots of unity with $\ZZ/3\ZZ$). If $\trace(\rho_f(\Frob_{\id{p}_j})) \equiv
\trace(\rho_\E(\Frob_{\id{p}_j})) \pmod{2}$ for $1 \le j \le m'$, the two residual
representations are isomorphic. If not, end with output ``the two
representations are not isomorphic''.

\item Apply the previous case, steps $(7)$ to $(10)$, with $K$ replaced
  by $L_\E$.

\end{enumerate}

\section{Sources of two-dimensional representations of $G_K$}

Let $K$ be an imaginary quadratic field. We want to consider
two-dimensional, irreducible, $p$-adic representations of the group
$G_K := \Gal(\bar{\Q}/K)$.\\ 
The first natural source of such representations comes from the action
of $G_K$ on the torsion points of an elliptic curve $\E$ defined over
$K$. More precisely, we consider the Tate module $T_p(\E)$ which is a
free rank two $\Z_p$-module with a $G_K$-action, thus gives rise to a
$p$-adic representation
$$ \rho_{\E,p} : G_K \rightarrow \GL_2(\Z_p). $$ In order to make sure
that the Galois representation $\rho_{\E,p}$ is absolutely irreducible
we will assume that $\E$ does not have Complex Multiplication. The
ramification locus of the representation $\rho_{\E,p}$ consists of
primes of $K$ dividing $p$ together with the set $S$ of primes of bad
reduction of $\E$. The family of Galois representations $\{ \rho_{\E,p}
\}$ is a compatible family and has conductor equal to the conductor of
the elliptic curve $\E$.\\ On the other hand, Harris-Soudry-Taylor,
Taylor and Berger-Harcos (cf. \cite{harris-taylor}, \cite{taylorII}
and \cite{berger-harcos}) have proved that one can attach compatible
families of two-dimensional Galois representations $\{ \rho_p \}$ to
any regular algebraic cuspidal automorphic representation $\pi$ of
$\GL_2(\AA_K)$, assuming that it has unitary central character
$\omega$ with $\omega = \omega^c$. As in the case of classical modular
forms ``to be attached" means that there is a correspondence between
the ramification loci of $\pi$ and the representation $\rho_p$ and
also that, at unramified places, the characteristic polynomial of
$\rho_p (\Frob_{\id{p}})$ agrees with the Hecke polynomial of $\pi$ at
$\id{p}$. However, since the method for constructing these Galois
representations depends on using a theta lift to link with automorphic
forms on $\GSp_4(\mathbb{A}_\Q)$, it can not be excluded that the
representations $\rho_p$ also ramify at the primes that ramify in
$K/\Q$. The precise statement of the result, valid only under the
assumption $\omega = \omega^c$, is the following (cf. \cite{taylorII},
\cite{harris-taylor} and \cite{berger-harcos}):

\begin{thm} 
\label{Harris-Taylor}
Let $S$ be the set of places in $K$ dividing $p$ or where $K/\Q$ or
$\pi$ or $\pi^c$ ramify. Then there exists an irreducible
representation:
$$ \rho_{\pi,p} : G_K \rightarrow \GL_2(\bar{\Q}_p) $$ such that if
$\id{p}$ is a prime of $K$ not in $S$ then $\rho_{\pi,p}$ is
unramified at $\id{p}$ and the characteristic polynomial of
$\rho_{\pi,p} (\Frob_{\id{p}})$ agrees with the Hecke polynomial
of $\pi$ at $\id{p}$
\end{thm}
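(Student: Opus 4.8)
This is the main theorem of \cite{harris-taylor}, \cite{taylorII} and \cite{berger-harcos}; here I only outline the strategy. The plan is to transport the question to $\GSp_4$ over $\QQ$, where the required Galois representations are known to exist, and then to descend back to $G_K$.

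The first step is to attach to $\pi$, by means of a theta correspondence, an automorphic representation $\Pi$ of $\GSp_4(\AA_\QQ)$. One chooses a four-dimensional quadratic space $V/\QQ$ whose discriminant encodes $K$, so that the similitude orthogonal group $\mathrm{GSO}(V)$ is built from $\mathrm{Res}_{K/\QQ}\GL_2$ and $\pi$ can be viewed on the $\mathrm{GSO}(V)(\AA_\QQ)$-side of the dual pair $(\mathrm{GSO}(V),\GSp_4)$. The hypothesis $\omega=\omega^c$ is precisely what allows the central character to descend to $\QQ$, which is what makes the lift to $\GSp_4(\AA_\QQ)$ well defined and nonzero. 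The real work here is to arrange, by a suitable choice of local data, that $\Pi$ is cuspidal and not CAP, and to identify its local components with those of $\pi$ at almost all places; the regularity of $\pi$, together with the absence of CM, is used to place $\Pi$ in the cohomological — generically holomorphic — range.

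Next, to the cohomological cuspidal $\Pi$ one attaches, by the work of Taylor, Laumon and Weissauer on the étale cohomology of Siegel modular threefolds, a continuous representation $R_\ell\colon G_\QQ\to\GSp_4(\bar\QQ_\ell)$ whose characteristic polynomials at good Frobenii are the Hecke polynomials of $\Pi$. Since $V$ comes from $K$, the $\GSp_4$-parameter of $\Pi$ is (up to a twist) induced from $K$, so $R_\ell|_{G_K}$ is reducible; one defines $\rho_{\pi,\ell}$ to be one of its two-dimensional summands, the complementary summand being its $\Gal(K/\QQ)$-conjugate. Matching Satake parameters through the theta correspondence and applying strong multiplicity one together with the Chebotarev density theorem, one checks that for $\id{p}\notin S$ the representation $\rho_{\pi,\ell}$ is unramified at $\id{p}$ with the prescribed characteristic polynomial at $\Frob_{\id{p}}$; irreducibility of $\rho_{\pi,\ell}$ then follows, as a reducible two-dimensional representation would force $\pi$ to be Eisenstein or an automorphic induction contradicting regularity.

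I expect two main obstacles. First, when the theta lift fails to be holomorphic, or for a general regular algebraic cuspidal $\pi$, no purely geometric construction is available; this is the heart of \cite{taylorII}, where instead one shows that the residual representation $\bar\rho_{\pi,\ell}$ occurs, for infinitely many $\ell$, in the cohomology of the arithmetic hyperbolic $3$-manifold attached to $\GL_2/K$, and then assembles $\rho_{\pi,\ell}$ in characteristic zero via a pseudo-representation and Galois-deformation argument using the $\GSp_4$-representations of Siegel forms congruent to the lift, the local properties of the resulting family being refined in \cite{berger-harcos}. Second — and this is the reason the set $S$ in the statement is enlarged beyond the primes above $p$ and those where $\pi$ or $\pi^c$ ramify — the whole construction passes through $\QQ$ by means of theta kernels whose level is divisible by $\mathrm{disc}(K)$, so one cannot exclude ramification of $\rho_{\pi,\ell}$ at the primes of $K$ dividing $\mathrm{disc}(K)$, and these must be allowed as possible bad places.
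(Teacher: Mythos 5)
The paper itself gives no proof of Theorem~\ref{Harris-Taylor}: it is stated as a cited external result from \cite{harris-taylor}, \cite{taylorII} and \cite{berger-harcos}, accompanied only by the one-sentence remark that, because the construction passes through a theta lift to $\GSp_4(\AA_\QQ)$, ramification at the primes of $K$ over $\mathrm{disc}(K)$ cannot be excluded. Your sketch accurately captures the strategy of those references --- the theta lift to $\GSp_4$, the role of $\omega=\omega^c$ in making the lift descend to $\QQ$, the Galois representations from Siegel threefold cohomology, restriction to $G_K$ and extraction of a two-dimensional summand, the pseudo-representation/congruence argument of \cite{taylorII} when the lift is not directly accessible, and in particular the precise reason the set $S$ includes the primes ramified in $K/\QQ$ --- so it is a correct (if necessarily high-level) account of the same route the paper points to. No discrepancy with the paper, which simply does not reprove this theorem.
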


\begin{remark}
Observe that, in particular, if for some prime $\id{p}$
ramifying in $K/\Q$ we happen to know that $\rho_{\pi,p}$ is
unramified at $\id{p}$, the above theorem does not imply that the
trace of $\rho_{\pi,p} (\Frob_{\id{p}})$ agrees with the Hecke
eigenvalue of $\pi$ at $\id{p}$, though it is expected that these
two values should agree. It is also expected that there is a conductor
for the family $\{ \rho_{\pi,p} \}$, i.e., that the conductor should
be independent of $p$ as in the case of elliptic curves. The value of
this conductor should also agree with the level of $\pi$.  
\end{remark}

\begin{remark}
Since the families of Galois representations attached to an elliptic
curve $\E$ over $K$ and to a cuspidal automorphic representation $\pi$
by the previous result are both compatible families, if one has for
one prime $p$ that $\rho_{\E,p} \cong \rho_{\pi,p}$ then the
same holds for every prime $p$.  
\end{remark}

\begin{remark}
Even if an automorphic representation $\pi$ as above has integer
eigenvalues and the right weight so that the attached Galois
representations ``look like" those attached to some elliptic curve,
one has to be careful because over imaginary fields such Galois
representations may correspond to a ``fake elliptic curve"
instead. Namely, such a two-dimensional Galois representation of $K$
may correspond to some abelian surface having Quaternionic
Multiplication over $K$, i.e., the action of $G_K$ on the $p$-adic
Tate module of $A$ is isomorphic to two copies of the Galois
representation.
\end{remark}
\begin{remark} At first the image is defined on a finite extension of
  $\Q_p$. Actually, it can be defined on the ring of integer of an at
  most degree $4$ extension $E_{\bfP}$ of $\Q_p$. Furthermore, let
  $v_i$, $i=1,2$ be two unramified paces of $K$ and let
  $\alpha_i,\beta_i$ be the roots of the characteristic polynomial of
  $\Frob_{v_i}$. If $\alpha_{v_i} \neq \beta_{v_i}$ and, in the case
  $v_i$ is split, $\alpha_{\overline{v_i}} + \beta_{\overline{v_i}}
  \neq 0$ then we can take $E=\Q[\alpha_{v_1},\alpha_{v_2}]$ and
  $E_{\bfP}$ as its completion at any prime above $p$ by Corollary $1$
  of \cite{taylorII}.
\label{Taylor}
\end{remark}

\section{Faltings-Serre's method}

\subsection{First case: the image is absolutely irreducible}

On this section we review Faltings-Serre's (\cite{serre}) method by stating the
main ideas of \cite{schutt} (Section 5) on our particular case. Let
\[
\rho_i: \Gal(\bar\Q/K) \rightarrow GL_2(\ZZ_l)
\]
be representations for $i=1,2$ such that they satisfy:
\begin{itemize}
\item They have the same determinant.
\item The mod $l$ reductions are absolutely irreducible and isomorphic.
\item There exists a prime $\id{p}$ such that
  $\trace(\rho_1(\Frob_{\id{p}})) \neq \trace(\rho_2(\Frob_{\id{p}}))$.
\end{itemize}
We want to give a finite set of candidates for $\id{p}$. Chose the
  maximal $r$ such that $\trace(\rho_1) \equiv \trace(\rho_2)
  \pmod{l^r}$, so we obtain a non-trivial map $\phi:\Gal(\overline
  \Q/K) \rightarrow \FF_l$ given by
\[
\phi(\sigma) \equiv \frac{\trace(\rho_1(\sigma)) -
  \trace(\rho_2(\sigma))}{l^r} \pmod l.
\]
If we assume that $\tilde{\rho_1} = \tilde{\rho_2}$, we can factor
$\phi$ through $M_2(\FF_l) \rtimes \im(\tilde{\rho_1})$. For doing
this, 
\[
\rho_1(\sigma) = (1 + l^r \mu(\sigma)) \rho_2(\sigma)
\]
for some $\mu(\sigma) \in M_2(\Z_l)$, for all $\sigma \in
\Gal(\overline \Q/K)$. Define the map $\varphi:\Gal(\bar{\QQ}/K)
\mapsto M_2(\FF_l) \rtimes \im(\tilde{\rho_1})$ by
\[
\varphi(\sigma)= (\mu(\sigma), \tilde \rho_1(\sigma)) \pmod l.
\]
Then $\phi(\sigma) \equiv \trace(\mu(\sigma) \tilde \rho_1(\sigma))
\pmod l$, i.e. $\phi(A,C)=\trace(AC)$ on $M_2(\FF_l) \rtimes
\im(\tilde{\rho_1})$. An easy computation shows that the group
structure on the semidirect product corresponds to the action by
conjugation, i.e.
\[
\mu(\sigma \tau) \equiv \mu(\sigma) + \tilde \rho_1(\sigma)^{-1}  \mu(\tau) \tilde
\rho_1(\sigma) \pmod l.
\]
Let $\tilde \mu$ denote the composition of $\mu$ with reduction modulo
$l$. The condition $\det(\rho_1) = \det(\rho_2)$ implies that
$\im(\tilde \mu) \subset M_2^0(\FF_l) := \{M \in M_2(\FF_l) \, : \,
\trace(M)\equiv 0 \pmod l\}$, hence it has order at most $l^3$.

Assume that $l=2$ and $\im(\tilde \rho_i) = S_3$, then 

\[
M_2^0(\FF_2)\rtimes S_3 \simeq S_4 \times C_2.
\]

This can be proved in different ways, we give an explicit isomorphism
for latter considerations. Take the isomorphism between $M_2(\FF_2)$
and $S_3$ given by
\begin{eqnarray*}
(12) & \mapsto & \left(\begin{smallmatrix} 0& 1\\ 1& 0 \end{smallmatrix}
     \right), \\
(13) & \mapsto & \left(\begin{smallmatrix} 1& 0\\ 1& 1 \end{smallmatrix}
     \right). \\ 
\end{eqnarray*}
Take $\set{ \left(\begin{smallmatrix}1&1\\ 0 & 1
\end{smallmatrix}\right), \left(\begin{smallmatrix}1&0\\ 1 & 1
\end{smallmatrix}\right), \left(\begin{smallmatrix}1&0\\ 0 & 1
\end{smallmatrix}\right)}$ as a basis for $M_2^0(\FF_2)$.
It is clear that the action of $S_3$ on the last element is trivial.
If we denote $v_1,v_2$ the first two elements of the basis and $v_3$
their sum, the action of $\sigma \in S_3$ on the Klein group
$C_2 \times C_2$ (spanned by $v_1$ and $v_2$) is $\sigma(v_i) =
v_{\sigma(i)}$. Since $S_4 \simeq S_3 \ltimes (C_2 \times C_2)$ with
the same action as described above we get the desired isomorphism.

Clearly the elements of $S_3 \times \left( \begin{smallmatrix} 0 & 0\\
0 & 0 \end{smallmatrix} \right)$, $\set{1} \times M_2^0(\FF_2)$ and
$\set{\sigma \in S_3 \st \sigma^2 =1} \times \left(
\begin{smallmatrix}1 &0 \\ 0 & 1
\end{smallmatrix} \right)$ go to $0$ by $\phi$. It can be seen that
all the other elements have non-trivial image (which correspond to
the elements of order $4$ or $6$ on $S_4 \times C_2$). So we need to
compute all possible extensions of $L$ with Galois group over $K$
isomorphic to $S_4 \times C_2$ and primes $\id{p} \subset K$ with
inertial degree $4$ or $6$ on each field.

\begin{remark} In the proof given above one starts with a $\mod \ell^r$
congruence between the traces of $\rho_1$ and $\rho_2$ and uses the
fact that this implies that the two $\mod \ell^r$ representations are
isomorphic. This result is proved in \cite{serre3} (Theorem $1$) but only
with the assumption that the residual $\mod \ell$ representations are
absolutely irreducible. In fact, it is false in the residually
reducible case, and this is one of the reasons why the above method
does not extend to the case of residual image cyclic of order $3$.
When the residual representations are reducible there are
counter-examples to this claim even assuming that they are
semi-simple. We thank Professor J.-P. Serre for pointing out the
following counter-example to us: take $\ell=2$ and consider two
characters $\chi$ and $\chi'$ defined $\mod 2^r$ such that they agree
$\mod 2^{r-1}$ but not $\mod 2^r$. Then $\chi \oplus \chi$ and $\chi'
\oplus \chi'$ are two-dimensional Galois representations defined $\mod
2^r$ having the same trace but they are not isomorphic.
\end{remark}

\subsection{Second case: the image is a $2$-group}

This case was treated on \cite{livne}, where the author proves the
next Theorem:

\begin{thm}
\label{Livne}
 Let $K$ be an imaginary quadratic field, $S$ a finite set of primes
 of $K$ and $E$ a finite extension of $\Q_2$. Denote by $K_S$ the
 compositum of all quadratic extensions of $K$ unramified outside $S$
 and by $\bfP_2$ the maximal prime ideal of $\Om_E$. Suppose
 $\rho_1,\rho_2:\Gal(\overline \QQ/K)\to\GL_2(E)$ are continuous
 representations, unramified outside $S$, satisfying:
\begin{enumerate}[1.]
\item $\trace(\rho_1)\equiv\trace(\rho_2)\equiv 0 \pmod {\bfP_2}$ and
  $\det(\rho_1)\equiv\det(\rho_2) \pmod {\bfP_2}$.
\item There exists a set $T$ of primes of $K$, disjoint from $S$, for which
\begin{enumerate}[(i)]
\item The image of the set $\{\Frob_t\}$ in the $(\Z/2\Z$-vector
space$)$ $\Gal(K_S/K)$ is non-cubic.
\item $\trace(\rho_1(\Frob_t))=\trace(\rho_2(\Frob_t))$ and
$\det(\rho_1(\Frob_t))=\det(\rho_2(\Frob_t))$ for all $t\in T$.
\end{enumerate}
\end{enumerate}
Then $\rho_1$ and $\rho_2$ have isomorphic semi-simplifications.
\begin{proof} See \cite{livne}.
\end{proof}
\end{thm}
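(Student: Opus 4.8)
The plan is to reconstruct the argument of Livn\'e \cite{livne}. Since the conclusion concerns only semisimplifications, the first move is to replace each $\rho_i$ by $\rho_i^{ss}$: each irreducible constituent (a $2$-dimensional irreducible representation, or a character $G_K\to E^\times$) has compact image, so after an independent conjugation of each we may assume $\rho_i\colon G_K\to\GL_2(\Om_E)$, with all hypotheses and the desired conclusion unaffected. Reducing modulo $\bfP_2$, the hypothesis $\trace\bar\rho_i\equiv 0$ forces, in characteristic $2$, the characteristic polynomial of every $\bar\rho_i(g)$ to be a square $(X+\eta_i(g))^2$, so $\eta_i$ is a character valued in the residue field with $\eta_i^2=\det\bar\rho_i$, and $\bar\rho_i\otimes\eta_i^{-1}$ has every element unipotent, hence trivial semisimplification by Kolchin's theorem. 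As $\det\bar\rho_1=\det\bar\rho_2$ and square roots are unique in characteristic $2$, one gets $\eta_1=\eta_2=:\eta$; since $\eta$ has odd order it lifts by Teichm\"uller to $\tilde\eta\colon G_K\to\Om_E^\times$, and twisting both $\rho_i$ by $\tilde\eta^{-1}$ preserves every hypothesis and the conclusion. So I may assume both $\bar\rho_i$ are \emph{unipotent}; in particular the $\rho_i$ have pro-$2$ image and are unramified outside $S$.

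Next I argue by contradiction, assuming $\rho_1^{ss}\not\cong\rho_2^{ss}$. The idea is to package the discrepancy into a surjection $\psi\colon G_K\to P$ onto a finite $2$-group, unramified outside $S$, carrying a filtration $P\supset P_1\supset P_2\supset 1$ of length $3$ with elementary-abelian graded pieces, together with a proper subset $Z\subsetneq P$ such that $\trace\rho_1(\Frob_{\id p})=\trace\rho_2(\Frob_{\id p})$ and $\det\rho_1(\Frob_{\id p})=\det\rho_2(\Frob_{\id p})$ hold simultaneously if and only if $\psi(\Frob_{\id p})\in Z$. Concretely one studies $g\mapsto(\rho_1(g),\rho_2(g))$ in $\bigl(1+\bfP_2 M_2(\Om_E)\bigr)^2$ and measures, layer by layer, the obstruction to $\rho_1$ and $\rho_2$ being conjugate modulo $\bfP_2^{\,j}$. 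The reason the number $3$ --- and hence ``cubic'' --- appears is that this obstruction is governed by the trace-zero matrices $M_2^0(\FF_2)=\{M\in M_2(\FF_2):\trace M=0\}$, a space of dimension $3$ (exactly as in the absolutely irreducible case treated above), so that non-conjugacy of $\rho_1^{ss}$ and $\rho_2^{ss}$ is already detected modulo $\bfP_2^{\,3}$; this makes $P$ finite with a length-$3$ filtration and makes $\psi$ nontrivial on the top graded piece.

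To finish, let $V=\Gal(K_S/K)$, a finite $\FF_2$-vector space, and let $\bar T\subset V$ be the image of $\{\Frob_t\}_{t\in T}$. The maximal elementary-abelian quotient of $P$ is a quotient of $V$, and unwinding the construction one shows that the condition ``$\psi(\Frob_{\id p})\in Z$'', expressed through the image of $\Frob_{\id p}$ in $V$, is the vanishing at that point of a fixed homogeneous cubic form $C$ on $V$; moreover $C$ is not the zero form, precisely because $\psi$ is nontrivial on the top layer (which is where $\rho_1^{ss}\not\cong\rho_2^{ss}$ enters). By condition 2(ii) each $\Frob_t$, $t\in T$, satisfies the two equalities, hence $\psi(\Frob_t)\in Z$, hence $C$ vanishes on $\bar T$; but $\bar T$ is non-cubic by condition 2(i), so $C$ vanishes on all of $V$ --- a contradiction. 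Therefore $\rho_1^{ss}\cong\rho_2^{ss}$.

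The hard part is the second paragraph: proving that three successive layers suffice --- equivalently, that the obstruction lives in the $3$-dimensional space $M_2^0(\FF_2)$ --- and, in tandem, producing $P$ and $Z$ explicitly and checking that simultaneous agreement of traces and determinants on a Frobenius genuinely translates into the vanishing of a \emph{cubic} (and not merely quadratic or linear) form on $V$. This is where the $2\times2$ matrix combinatorics over $\ZZ/2$, $\ZZ/4$ and $\ZZ/8$ must be carried out with care, and it is exactly what makes non-cubicity of $\bar T$ the correct and sharp hypothesis. For the details I would follow Livn\'e \cite{livne}.
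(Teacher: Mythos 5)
The paper's own proof is a one-line citation of Livn\'e, and you are doing the same thing at a higher resolution: you sketch the strategy and then explicitly defer the combinatorics to \cite{livne}. Your global outline is correct — twist by a Teichm\"uller lift of the residual character so that both representations become residually unipotent and hence have pro-$2$ image, then, assuming $\rho_1^{ss}\not\cong\rho_2^{ss}$, show that the locus where traces and determinants agree maps into the zero set of a nonzero cubic form on $V=\Gal(K_S/K)$, which contradicts non-cubicity of $\bar T$.

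Two heuristic remarks in your second paragraph are off, though, and would mislead someone trying to actually carry out the deferred step. The degree $3$ does \emph{not} come from $\dim_{\FF_2}M_2^0(\FF_2)=3$; that count is the engine of the absolutely irreducible Faltings--Serre case. In Livn\'e's argument the cubic arises because, once the image is pro-$2$ and residually unipotent, one can write $\rho(g_i)=1+\pi a_i$ and expand $\trace\prod_i(1+\pi a_i)^{\epsilon_i}$ as a polynomial in $\epsilon\in\FF_2^n$; the coefficient of a degree-$k$ monomial is divisible by $\pi^k$, so after reducing modulo the relevant power of $\bfP_2$ one is left with a genuine cubic in $\epsilon$, and the same analysis applies to the determinant. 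Relatedly, the sentence ``non-conjugacy of $\rho_1^{ss}$ and $\rho_2^{ss}$ is already detected modulo $\bfP_2^{\,3}$'' cannot be literally true — two representations can agree to arbitrarily high $\bfP_2$-adic order yet remain non-isomorphic; what the argument gives is a three-step filtration \emph{starting at the first level of discrepancy}, not an absolute bound $\bfP_2^3$. Finally, you should be careful with ``the condition $\psi(\Frob_{\id p})\in Z$, expressed through the image of $\Frob_{\id p}$ in $V$, is the vanishing \ldots of a cubic form'': what Livn\'e actually proves is that the \emph{image in $V$} of the agreement set is contained in the zero set of a nonzero cubic; it is not needed (and not obviously true) that membership in $Z$ factors through $V$. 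None of this is a gap you claim to have filled — you explicitly punt to \cite{livne} — but these slips are exactly at the ``hard part'' you identify, so they are worth correcting before the sketch is trusted.
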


The following result is useful for identifying non-cubic subsets of
  $(\Z/2\Z)$-vector spaces.

\begin{prop}
Let $V$ be a vector space over $\Z/2\Z$. Then a function
$f:V\to\Z/2\Z$ is represented by a homogeneous polynomial of degree
$3$ if and only if $\sum_{I\subset\{0,1,2,3\}}f(\sum_{i\in I}v_i)=0$
for every subset $\{v_0,v_1,v_2,v_3\}\subset V$.
\begin{proof} See \cite{livne}.
\end{proof}
\end{prop}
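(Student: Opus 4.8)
The plan is to reduce the statement to the standard combinatorial identity for finite differences, using that over $\Z/2\Z$ any function on a vector space is a polynomial. First I would recall that every function $f\colon V\to\Z/2\Z$ is uniquely represented by a polynomial (a sum of monomials $\prod_{i\in J}x_i$ over subsets $J$ of a coordinate basis), because the monomials form a basis of the space of such functions by an inclusion-exclusion (M\"obius) argument. So the content of the statement is that the degree-$3$ summand vanishes precisely when the displayed alternating sums over all $4$-element subsets are zero. The key observation is that, over $\Z/2\Z$, the operator $f\mapsto \sum_{I\subset\{0,1,2,3\}} f(\sum_{i\in I}v_i)$ is exactly the fourth iterated finite difference $(\Delta_{v_0}\Delta_{v_1}\Delta_{v_2}\Delta_{v_3}f)(0)$, where $(\Delta_w g)(x)=g(x+w)+g(x)$ (signs are irrelevant in characteristic $2$). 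Iterated differences lower polynomial degree by one each time, so a fourth difference annihilates every polynomial of degree $\le 3$; this gives the ``only if'' direction immediately.

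For the ``if'' direction I would argue by contrapositive on the degree. Fix a coordinate basis and write $f=\sum_J c_J \prod_{i\in J}x_i$. Suppose some $c_J\ne 0$ with $|J|\ge 4$; choose such a $J$ of maximal size, say with $|J|=d\ge 4$, and write $J=\{j_1,\dots,j_d\}$. I would then specialize: take $v_0=e_{j_1}$, $v_1=e_{j_2}$, $v_2=e_{j_3}$, and $v_3=e_{j_4}+e_{j_5}+\cdots+e_{j_d}$ (so $v_3$ is the sum of the remaining basis vectors indexed by $J$). Computing the fourth difference $\Delta_{v_0}\Delta_{v_1}\Delta_{v_2}\Delta_{v_3}f$ at $0$ and extracting which monomials survive, one checks that the only monomial of $f$ contributing is $\prod_{i\in J}x_i$ (monomials not containing all of $j_1,j_2,j_3$ are killed by one of the single-variable differences $\Delta_{e_{j_k}}$, and among those containing $x_{j_1}x_{j_2}x_{j_3}$, the $\Delta_{v_3}$ step forces the remaining factor to be divisible by $\prod_{i\in J\setminus\{j_1,j_2,j_3\}}x_i$; maximality of $|J|$ then rules out anything larger). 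Hence the sum equals $c_J=1\ne 0$, so $f$ fails the identity. This shows that if $f$ satisfies the identity for all $4$-element subsets then $f$ has degree $\le 3$, i.e. it is represented by a homogeneous polynomial of degree $3$ (absorbing lower-degree terms, which is harmless since they are automatically degree-$\le 3$).

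The main obstacle is the bookkeeping in the ``if'' direction: verifying that with the chosen specialization exactly one monomial of $f$ survives the fourth difference. This requires care because $v_3$ is not a single basis vector, so $\Delta_{v_3}$ can produce several terms; one must track cross terms and use the maximality of $|J|$ to ensure none of them survives. An alternative, perhaps cleaner, route that avoids choosing a basis is to note that the space of functions $V\to\Z/2\Z$ annihilated by all fourth differences is spanned by restrictions of degree-$\le 3$ monomials (by the ``only if'' part it contains them, and a dimension count over a finite-dimensional $V$ shows there is nothing else), but making the dimension count rigorous is essentially equivalent to the monomial-survival computation above. In any case this is the lemma as proved in \cite{livne}, so I would ultimately defer to that reference for the routine verification.
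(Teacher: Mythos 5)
Your identification of the sum with the fourth iterated finite difference $(\Delta_{v_0}\Delta_{v_1}\Delta_{v_2}\Delta_{v_3}f)(0)$ is correct and is surely the right framework, and the ``only if'' direction (each $\Delta_w$ drops polynomial degree by at least one, so four of them kill anything of degree $\le 3$) is fine. The gap is in the ``if'' direction: the specialization $v_3 = e_{j_4}+\cdots+e_{j_d}$ does \emph{not} isolate $c_J$, and the claim that ``the $\Delta_{v_3}$ step forces the remaining factor to be divisible by $\prod_{i\in J\setminus\{j_1,j_2,j_3\}}x_i$'' has the containment backwards. Writing $h := \Delta_{e_{j_1}}\Delta_{e_{j_2}}\Delta_{e_{j_3}}f$, the quantity you compute is $h(v_3)+h(0)$; a monomial $\prod_{i\in L'}x_i$ of $h$ contributes $1$ to $h(v_3)$ precisely when $L'\subseteq\{j_4,\dots,j_d\}$, so the total is $\sum_{\{j_1,j_2,j_3\}\subsetneq L\subseteq J}c_L$, not $c_J$. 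Maximality of $|J|$ excludes $L\supsetneq J$ but places no constraint on $L\subsetneq J$. A concrete counterexample: take $f = x_1x_2x_3x_4x_5 + x_1x_2x_3x_4$. Then $J=\{1,\dots,5\}$ is the unique top-degree monomial, yet your choice $v_3 = e_4+e_5$ gives $h = x_4x_5+x_4$ and $h(v_3)+h(0)=0$, even though $\deg f = 5\ge 4$.

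The strategy is easily repaired and the repair simplifies the argument. Since $\Delta_{e_j}$ sends $\prod_{i\in L}x_i$ to $\prod_{i\in L\setminus\{j\}}x_i$ if $j\in L$ and to $0$ otherwise, and distinct $L$ containing $j$ map to distinct $L\setminus\{j\}$, there is no cancellation: the coefficient of $\prod_{i\in J\setminus\{j_1,j_2,j_3\}}x_i$ in $h=\Delta_{e_{j_1}}\Delta_{e_{j_2}}\Delta_{e_{j_3}}f$ is exactly $c_J=1$, so $\deg h = d-3\ge 1$. Hence $h$ is nonconstant, and you may take $v_3$ to be \emph{any} vector with $h(v_3)\ne h(0)$ rather than prescribing it in advance; this gives a nonzero fourth difference and completes the contrapositive. (Equivalently, show by induction on $k$, using this degree-drop lemma, that $\deg f\ge k\ge 1$ implies some $k$-fold difference of $f$ is nonzero at $0$.) Since the paper itself only cites \cite{livne} for this proposition, there is no in-paper proof to compare against, but the finite-difference reduction is indeed the standard route and your overall plan is sound once the choice of $v_3$ is corrected.
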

\end{remark}

\subsection{Third case: the image is cyclic of order $3$}
This is a mix of the previous two cases. Let $E$ be a finite extension
of $\QQ_2$ such that its residue field is isomorphic to $\FF_2$.
Suppose $\rho_1,\rho_2:\Gal(\bar \QQ/K) \rightarrow \GL_2(E)$ are
continuous representations such that the residual representations are
isomorphic and have image a cyclic group of order $3$. Let $K_\rho$ be
the fixed field of the residual representations kernel. If we restric
the two representations to $\Gal(\bar \QQ/ K_\rho)$, we get:

\begin{equation*}
\rho_1,\rho_2: \Gal(\bar \QQ/K_\rho) \rightarrow \GL_2(E),
\end{equation*}
whose residual representation have trivial image. Hence we are in
$2$-group case for the field $K_\rho$ and Livne's Theorem \ref{Livne}
applies.

\section{Proof of the Algorithm}

Before giving a proof for each case we make some general
considerations. The image of $\tilde \rho_\E$ is isomorphic to the
Galois group $\Gal(L_\E/K)$. If $\E(K)$ has a two torsion point, its
image is a $2$-group. If not, assume (via a change of variables) that
the elliptic curve has equation
\[
\E:y^2=x^3+a_2x^2+a_4x+a_6
\]
and denote by $\alpha,\beta,\gamma$ the roots of
$x^3+a_2x^2+a_4x+a_6$. Using elementary Galois theory it can be seen
that $L_\E=K[\alpha-\beta]$. Furthermore, using elementary symmetric
functions, it can be seen that $\alpha-\beta$ is a root of the
polynomial
\[
x^6+x^4(6a_4-2a_2^2)+x^2(a_2^4-6a_2^2a_4+9a_4^2)+4a_6a_2^3-18a_6a_4a_2+4a_4^3-a_4^2a_2^2+27a_6^2.
\]
If this polynomial is irreducible over $K$, the image of
$\tilde{\rho}_\E$ is isomorphic to $S_3$ while if it is reducible, the
image is isomorphic to $C_3$.

Note that under the isomorphism between $S_3$ and $\GL_2(\FF_2)$ given
on the previous section, the order $1$ or $2$ elements of $S_3$ have
even trace while the order $3$ ones have odd trace.

In the case where the image is not a $2$-group, we need to prove that
the image lies (after conjugation) on an extension $E$ of $\Q_2$ with
residual field $\FF_2$. 

\begin{thm} 
If $\E$ has no Complex Multiplication, then we can chose split primes
of $K$, $\bfP_i$, $i=1,2$ such that if
$\alpha_{\bfP_i},\beta_{\bfP_i}$ denote the roots of the
characteristic polynomial of $\Frob_{\bfP_i}$, then the field
  $E=\Q[\alpha_{\bfP_i}]$ has inertial degree $1$ at $2$ and
  $\alpha_{\bar{\bfP_i}} + \beta_{\bar{\bfP_i}} \neq 0$. In
  particular, if $\trace(\rho_\E(\Frob_{\bfP_i})) =
  \trace(\rho_{\pi,2}(\bfP_i))$ then by Taylor's argument (see Remark \ref{Taylor}),
  $\im(\tilde{\rho}_{\pi,2}) \subset \GL_2(\FF_2)$.
\end{thm}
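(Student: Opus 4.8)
The plan is to produce the primes $\bfP_i$ by a density/Chebotarev argument applied to a carefully chosen field. First I would recall the setup: since $\E$ has no complex multiplication, the image of the $2$-adic representation $\rho_{\E,2}$ is open in $\GL_2(\Z_2)$, so in particular its image is ``large'' and the traces $\trace(\rho_\E(\Frob_{\bfP}))$ take many values as $\bfP$ varies over split primes of $K$. The key point of Remark \ref{Taylor} is that if for two split primes $v_1,v_2$ we have $\alpha_{v_i}\neq\beta_{v_i}$ and, when $v_i$ is split, $\alpha_{\overline{v_i}}+\beta_{\overline{v_i}}\neq 0$, then Taylor's Corollary $1$ of \cite{taylorII} lets us realize $\rho_{\pi,2}$ over $E=\Q[\alpha_{v_1},\alpha_{v_2}]$; so I must choose split $\bfP_1,\bfP_2$ so that, in addition, $E=\Q[\alpha_{\bfP_1},\alpha_{\bfP_2}]$ has residue degree $1$ at $2$, i.e. $2$ splits completely (or at least has a degree-$1$ prime) in $E$. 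The condition ``$E$ has inertial degree $1$ at $2$'' is then what forces the residual image $\im(\tilde\rho_{\pi,2})$ into $\GL_2(\F_2)$ rather than $\GL_2$ of a larger residue field, once the trace identity at $\bfP_1,\bfP_2$ is known.

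The main step is therefore to find split primes $\bfP_1,\bfP_2$ of $K$ with: (a) $\Frob_{\bfP_i}$ has distinct eigenvalues in $\rho_{\E,2}$ (equivalently its characteristic polynomial $X^2-a_{\bfP_i}X+N\bfP_i$ has nonzero discriminant $a_{\bfP_i}^2-4N\bfP_i\neq 0$); (b) the eigenvalues generate, over $\Q$, a field $E$ in which $2$ has a prime of residue degree $1$; and (c) $a_{\overline{\bfP_i}}\neq 0$. For (b) the cleanest route is to arrange $a_{\bfP_i}$ \emph{odd}: then $X^2-a_{\bfP_i}X+N\bfP_i$ is separable mod $2$ over $\F_2$ when $N\bfP_i$ is odd (which it is, as $\bfP_i\nmid 2$), so $E=\Q[\alpha_{\bfP_i}]$ is a quadratic field in which $2$ is unramified, and I can further demand the two primes produce either the split case or a totally split compositum; in any event a single prime with $a_{\bfP}$ odd and $a_{\bfP}^2-4N\bfP$ a nonsquare already gives a quadratic $E$ with $2$ unramified, and taking two such primes with compatible splitting of $2$ gives residue degree $1$ in the compositum. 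Conditions (a), (c) and ``$a_{\bfP}$ odd'' are each nonvanishing/congruence conditions that cut out a positive-density set of split primes: by openness of $\im(\rho_{\E,2})$ in $\GL_2(\Z_2)$ and Chebotarev applied to the extension of $K$ cut out by $\rho_{\E,2}\bmod 4$ together with the extension trivializing $\bfP\mapsto\bfP/\overline{\bfP}$ in the class group, a positive proportion of split primes have $\trace\equiv 1\pmod 2$ and $\trace\not\equiv N\bfP\pmod{\text{small power of }2}$, forcing both $a_{\bfP}$ odd and $a_{\bfP}^2-4N\bfP\neq 0$; the condition $a_{\overline{\bfP}}\neq 0$ is handled the same way since $\overline{\bfP}$ is again a split prime and the set of split primes with $a_{\overline{\bfP}}=0$ has density zero. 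Choosing $\bfP_1,\bfP_2$ in the intersection of these positive-density sets, and, if needed, further imposing via Chebotarev that $2$ has a degree-$1$ prime in $\Q[\alpha_{\bfP_1},\alpha_{\bfP_2}]$ (this is again a splitting condition that survives intersection), produces the required primes.

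Given such $\bfP_1,\bfP_2$, the conclusion is Taylor's argument: the hypothesis $\trace(\rho_\E(\Frob_{\bfP_i}))=\trace(\rho_{\pi,2}(\bfP_i))$ together with $\alpha_{\bfP_i}\neq\beta_{\bfP_i}$ and $\alpha_{\overline{\bfP_i}}+\beta_{\overline{\bfP_i}}\neq 0$ shows, by Corollary $1$ of \cite{taylorII}, that $\rho_{\pi,2}$ can be conjugated to take values in $\GL_2(\Om_E)$ with $E=\Q[\alpha_{\bfP_1},\alpha_{\bfP_2}]$; since $2$ has residue degree $1$ in $E$, the completion $E_{\bfP}$ has residue field $\F_2$, and reducing gives $\im(\tilde\rho_{\pi,2})\subset\GL_2(\F_2)$. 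The main obstacle I anticipate is not the final deduction but verifying that the several conditions on $\bfP_i$ are \emph{simultaneously} satisfiable, i.e. that the relevant density computation in $\GL_2(\Z_2)$ (using no-CM to guarantee the image is open, hence contains $\SL_2(\Z_2)$ up to finite index) genuinely forces $a_{\bfP}$ odd with $a_{\bfP}^2\neq 4N\bfP$ on a positive-density set while keeping $\bfP$ split and $a_{\overline{\bfP}}\neq 0$; this is where one must be careful that the congruence conditions defining $E$'s behavior at $2$ are independent of the conditions coming from $\rho_{\E,2}$, which follows because the $2$-adic representation of $\E$ and the splitting field of $2$ are governed by disjoint pieces of the absolute Galois group of $K$.
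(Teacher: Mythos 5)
Your framework---use no-CM openness of $\im(\rho_{\E,2})$ and Chebotarev to produce the primes, then feed them into Remark \ref{Taylor}---is the right shape, but the central arithmetic choice you make to get ``inertial degree $1$ at $2$'' is exactly backwards. You propose to force $a_{\bfP_i}:=\trace(\rho_\E(\Frob_{\bfP_i}))$ \emph{odd}. Since $N\bfP_i$ is also odd, the characteristic polynomial then reduces mod $2$ to $X^2+X+1$, which is \emph{irreducible} over $\FF_2$; equivalently $a_{\bfP_i}^2-4N\bfP_i\equiv 5\pmod 8$, so $\Q_2[\alpha_{\bfP_i}]$ is the \emph{unramified} quadratic extension of $\Q_2$, i.e.\ $2$ is \emph{inert} in $\Q[\alpha_{\bfP_i}]$. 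That is residue degree $2$, not $1$. In particular the ``split case'' you invoke can never occur with $a_{\bfP_i}$ odd, and the compositum $\Q[\alpha_{\bfP_1},\alpha_{\bfP_2}]$ then has residue degree at least $2$ at every prime above $2$, so your closing Chebotarev tweak cannot repair it: no pair of primes with odd trace can produce a prime of residue degree $1$ over $2$, which is exactly what the theorem and Remark \ref{Taylor} require.

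The paper's proof goes the opposite way. It uses the no-CM hypothesis to obtain: (i) for each quadratic extension $F/\Q_2$, the set of $\bfP$ with $\Q_2[\alpha_{\bfP}]\cong F$ has positive density (so one may prescribe the local field at $2$); and (ii) the set of $\bfP$ with $a_{\bfP}=0$ has density zero. It then chooses $\bfP_1,\bfP_2$ split in $K$ (again a density-one condition) so that $\Q_2[\alpha_{\bfP_1}]$ and $\Q_2[\alpha_{\bfP_2}]$ are two distinct \emph{ramified} quadratic extensions of $\Q_2$ whose compositum is totally ramified, and so that $a_{\overline{\bfP_i}}\neq 0$. Ramified-or-split at $2$ corresponds to $a_{\bfP_i}$ \emph{even}, not odd, because then the Frobenius polynomial reduces to $(X+1)^2$ mod $2$. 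With this, $E=\Q[\alpha_{\bfP_1},\alpha_{\bfP_2}]$ has a prime above $2$ with residue field $\FF_2$, which is what lets one reduce $\rho_{\pi,2}$ into $\GL_2(\FF_2)$. So you should replace ``$a_{\bfP_i}$ odd'' by ``$a_{\bfP_i}$ even'' and add the condition that the two local fields at $2$ are ramified with totally ramified compositum (or that at least one of them is $\Q_2$ itself); your surrounding density argument then matches the paper's.
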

\begin{proof} Since $\E$ has no Complex Multiplication, if $F$ is any
  quadratic field extension of $\Q_2$, the set of primes $\bfP$ such
  that $\Q_2[\alpha_{\bfP}] = F$ has positive density (see for example
  Exercise $(3)$ of \cite{Serre2}). Also, the set of primes $\bfP$
  such that $\alpha_{\bfP}+\beta_{\bfP}=0$ has density zero (since
  $\E$ has no complex multiplication, see \cite{Serre4}), so we can
  find primes $\bfP$ such that $\Q_2[\alpha_{\bfP}] =F$ and
  $\alpha_{\bar{\bfP}} + \beta_{\bar{\bfP}} \neq 0$.  The fields
  $F_1$ and $F_2$ obtained adding the roots of the polynomials
  $x^2+14$ and $x^2+6$ to $\Q_2$ are two ramified extensions of
  $\Q_2$. Their composition is a degree $4$ field extension (since the
  prime $2$ is totally ramified on the composition of these extensions
  over $\Q$). Since the set of primes innert on $K$ have density zero,
  we can chose prime ideals $\bfP_1$ and $\bfP_2$ whose extensions of
  $\Q_2$ are isomorphic to $F_1$ and $F_2$. 
\end{proof}
Actually we search for the first two primes such that $2$ has no
inertial degree on the extension obtained adding to $\Q$ the roots of
their Frobenius automorphisms.

\medskip

The first step of the algorithm is to prove that the residual
representations are indeed isomorphic so as to apply Faltings-Serre's
method. In doing this we need to compute all extensions of a fixed
degree ($2$ or $3$ in our case) with prescribed ramification. Since we
deal with abelian extensions, we can use class field theory.
\begin{thm}
If $L/K$ is an abelian extension unramified outside the set of places
$\set{\id{p}_i}_{i=1}^n$ then there exists a modulus $\id{m} =
\prod_{i=1}^n \id{p}_i^{e(\id{p}_i)}$ such that $\Gal(L/K)$ corresponds to
  a subgroup of the ray class group $Cl(\Om_K,\id{m})$.
\end{thm}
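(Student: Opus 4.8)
The plan is to reduce the statement to the classical theorem of class field theory that finite abelian extensions of $K$ correspond to subgroups of ray class groups, and then to show that we may choose the exponents $e(\id{p}_i)$ in a way that depends only on the ramification data. First I would recall that since $L/K$ is abelian, the Artin reciprocity map identifies $\Gal(L/K)$ with a quotient $\Om_K^\times \backslash \AA_K^\times / U$ for a suitable open subgroup $U$ of the finite ideles, and the conductor $\id{f}(L/K)$ is the smallest modulus $\id{m}$ such that the congruence subgroup mod $\id{m}$ is contained in the norm group; by the conductor-discriminant relation, the primes dividing $\id{f}(L/K)$ are exactly the primes of $K$ that ramify in $L$, which by hypothesis lie in $\set{\id{p}_i}_{i=1}^n$. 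Hence $\id{f}(L/K) = \prod_{i=1}^n \id{p}_i^{e(\id{p}_i)}$ for some nonnegative integers $e(\id{p}_i)$, and taking $\id{m} = \id{f}(L/K)$ (or any multiple of it supported on the same primes) gives a surjection $Cl(\Om_K,\id{m}) \twoheadrightarrow \Gal(L/K)$, so that $\Gal(L/K)$ is (dual to, equivalently isomorphic to) a subgroup of the ray class group.

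The remaining point — and the one that matters for the algorithm — is to give an \emph{a priori} bound on the exponents $e(\id{p}_i)$ that does not require knowing $L$ in advance, since the algorithm needs a single modulus that captures \emph{all} such $L$ at once. Here I would use the standard bound on the conductor exponent of an abelian extension: if $\id{p}$ is a prime of $K$ above the rational prime $\ell$ with ramification index $e(\id{p}|\ell)$, then the exponent of $\id{p}$ in the conductor of any abelian extension of degree a power of a fixed small prime (here $2$ or $3$, as needed in the algorithm) is bounded in terms of $e(\id{p}|\ell)$ and $\ell$ via the filtration of the local units $\Op^\times \supset 1 + \id{p} \supset 1 + \id{p}^2 \supset \cdots$. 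Concretely, for $\id{p} \nmid \ell$ the extension is tamely ramified and the exponent is at most $1$; for $\id{p} \mid \ell$ one uses that $1 + \id{p}^k$ is contained in the $\ell$-th powers of $\Op^\times$ once $k > \frac{e(\id{p}|\ell)\,\ell}{\ell-1}$, which yields exactly the bounds $2e(\id{p}|2)+1$ when $\ell=2$ and $\lfloor \tfrac{3e(\id{p}|3)}{2}\rfloor + 1$ when $\ell=3$ appearing in the moduli of the algorithm.

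I expect the main obstacle to be bookkeeping rather than conceptual: one must verify that the local-units computation genuinely produces the exponents stated, paying attention to whether the extension is cyclic of order $\ell$ versus of order $\ell^2$ or larger (the algorithm only ever needs quadratic and cubic characters, so order $\ell$ suffices and the bounds can be taken as above), and one must be careful that the modulus is allowed to be a proper multiple of the conductor — this is harmless because enlarging $\id{m}$ only enlarges $Cl(\Om_K,\id{m})$, and $\Gal(L/K)$ remains a quotient, hence still appears as a subgroup after dualizing. With these points in place the theorem follows, and it licenses the use of \texttt{bnrinit}-style ray class group computations in every branch of the algorithm.
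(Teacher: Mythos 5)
The paper states this as a background fact of class field theory and gives no proof; your argument via Artin reciprocity and the conductor-discriminant formula is the standard one and is correct. One small point of phrasing: what the existence theorem actually gives is that $\Gal(L/K)$ is a \emph{quotient} of $Cl(\Om_K,\id{m})$ (with $L$ corresponding to the kernel subgroup), so the paper's ``corresponds to a subgroup'' should be read in that sense rather than as an isomorphism onto a subgroup, though for the algorithm, which works with characters, your dual reading is what is actually used. Your closing discussion of the exponent bounds $2e(\id{p}|2)+1$ and $\lfloor 3e(\id{p}|3)/2\rfloor+1$ via the filtration $1+\id{p}^k$ of local units is also correct, but note that this is really the content of the paper's \emph{next} result (Proposition~\ref{cohen}, which the paper settles by citation to Cohen, Prop.\ 3.3.21--3.3.22), not of the theorem at hand, which only asserts that \emph{some} modulus supported on the ramified primes works.
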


Since we are interested in the case $K$ an imaginary quadratic field,
all the ramified places of $L/K$ are finite ones, hence $\id{m}$ is an
ideal on $\Om_K$. A bound for $e(\id{p})$ is given by the following result.

\begin{prop}
Let $L/K$ be an abelian extension of prime degree $p$. If $\id{p}$
ramifies on $L/K$, then

\[
\left \{\begin{array}{cl}
e(\id{p}) = 1 & \text{if } \id{p} \nmid p\\
2
\le e(\id{p}) \le \left \lfloor\frac{p e(\id{p}|p)}{p-1} \right \rfloor +1 & \text{if }\id{p} \mid
 p.
\end{array} \right.
\]
\label{cohen}
\end{prop}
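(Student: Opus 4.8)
The plan is to reduce to the local question at the single prime $\id{p}$ dividing $p$, since for $\id{p}\nmid p$ the extension $L/K$ is tamely ramified of degree $p$ and the different has valuation exactly $e(\id{p}|\mathfrak{p}_{\mathbb Q})(p-1)$ read upstairs, which translates to conductor exponent $e(\id{p})=1$; this is the standard conductor-discriminant fact for tame cyclic extensions and I would just invoke it. So the content is the case $\id{p}\mid p$, where the extension is wildly ramified. First I would pass to the completion $K_{\id{p}}$, so that $L\otimes_K K_{\id{p}}$ is either split (nothing to prove) or a ramified cyclic extension $L_{\mathfrak P}/K_{\id{p}}$ of degree $p$; the conductor exponent of the corresponding character of $G_{K_{\id{p}}}$ is what we must bound, and by the conductor-discriminant formula this equals $v_{\id{p}}(\mathfrak d_{L_{\mathfrak P}/K_{\id{p}}})/(p-1)$ times an integer adjustment — more precisely, for a totally ramified degree-$p$ extension the discriminant exponent $d$ and the conductor exponent $\mathfrak f$ satisfy $d=(p-1)\mathfrak f$, so a bound on $d$ gives a bound on $\mathfrak f$.

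Next I would bound the discriminant exponent using the ramification filtration. Write $e=e(\id{p}|p)$ for the absolute ramification index of $K_{\id{p}}$ over $\mathbb Q_p$. For a wildly ramified cyclic degree-$p$ extension there is a single break $u$ in the lower-numbering ramification filtration, and the discriminant exponent is $d=(p-1)(u+1)$, hence the conductor exponent is $\mathfrak f=u+1$. The key input is the standard bound on how large the break $u$ can be: by the theory of the norm map (or equivalently by comparing with $\mathbb Q_p(\zeta_p)$, whose break is well known), one has $u\le \frac{p e}{p-1}$, with equality only in the most ramified case. Therefore $\mathfrak f = u+1 \le \big\lfloor \frac{p e}{p-1}\big\rfloor + 1$, which is exactly the asserted upper bound; and $\mathfrak f\ge 2$ because a wildly ramified extension cannot have conductor exponent $1$ (conductor exponent $1$ forces tame ramification). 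Reassembling the local exponents into the global modulus $\id{m}=\prod \id{p}_i^{e(\id{p}_i)}$ via the previous theorem completes the argument.

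The main obstacle is pinning down the sharp bound $u\le \frac{pe}{p-1}$ on the ramification break uniformly over all ambient fields $K_{\id{p}}$, rather than just for a fixed base. The cleanest route is Hasse–Arf combined with the explicit description of $\mathrm{Gal}(K_{\id{p}}^{\mathrm{ab}}/K_{\id{p}})$ via local class field theory: a degree-$p$ cyclic extension corresponds to an index-$p$ subgroup of $K_{\id{p}}^\times$ containing $(K_{\id{p}}^\times)^p$, and the filtration on $K_{\id{p}}^\times$ by higher unit groups $U^{(i)}$ matches the upper-numbering ramification filtration; since $U^{(i)}\subset (K_{\id{p}}^\times)^p$ as soon as $i>\frac{pe}{p-1}$ (one checks that the $p$-th power map carries $U^{(j)}$ onto $U^{(pj)}$ for small $j$ and onto $U^{(j+e\cdot v(p))}=U^{(j+e)}$ once $j$ is past the relevant threshold), the conductor of any such character is bounded by $\big\lfloor\frac{pe}{p-1}\big\rfloor+1$. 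I would cite Serre's \emph{Local Fields} (Ch.\ IV–V, and the computation of the filtration on units in Ch.\ XV) for these facts rather than reprove them; alternatively this is precisely the statement found in Cohen's book on computational algebraic number theory, which is presumably the intended reference (hence the label \texttt{cohen}).
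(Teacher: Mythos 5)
The paper's proof is a bare citation to \cite{cohen}, Propositions 3.3.21 and 3.3.22, exactly as you anticipated from the label. Your sketch supplies the argument behind those propositions and is correct in all essentials: pass to the completion, identify the conductor exponent of a nontrivial character of the cyclic group with $u+1$ where $u$ is the unique ramification break (via the conductor--discriminant formula, the different exponent $(u+1)(p-1)$, and the $p-1$ conjugate nontrivial characters), then bound $u$ by $pe/(p-1)$ using the behaviour of the $p$-th power map on the higher-unit filtration $U^{(i)}$ of $K_{\id{p}}^{\times}$ under local class field theory. The lower bound $e(\id{p})\ge 2$ in the wild case follows as you say, since conductor exponent $1$ would force the character to be trivial on $U^{(1)}$ and hence the ramification to be tame, impossible in residue characteristic $p$. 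One small imprecision in your tame case: the different exponent of $L_{\mathfrak{P}}/K_{\id{p}}$ is $p-1$ (ramification index $p$, tame), not $e(\id{p}\mid p)(p-1)$; the conductor--discriminant formula then gives $e(\id{p})=1$ directly, and the reference to the absolute ramification index there is a red herring, though the conclusion is unaffected.
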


\begin{proof} See \cite{cohen} Proposition $3.3.21$ and Proposition $3.3.22$.
\end{proof}




To distinguish representations, given a character $\psi$ of a
ray class field we need to find a prime ideal $\id{p}$ with
$\psi(\id{p}) \neq 1$. Let $\psi$ be a character of $Cl(\Om_K,\id{m}_K)$
of prime order $p$. Take any branch of the logarithm over $\CC$ and
identify $\log(\{\xi_p^i\})$ with $\ZZ/p\ZZ$ in any way (where $\xi_p$
denotes a primitive $p$-th root of unity).
\begin{prop} Let $K$ be a number field, $\id{m}_K$ a modulus and
$Cl(\Om_K,\id{m}_K)$ the ray class field for $\id{m}_K$. Let 
$\set{\psi_i}_{i=1}^n$ be a basis of order $p$ characters of
$Cl(\Om_K,\id{m})$ and $\set{\id{p}_j}_{j=1}^{n'}$ be prime ideals of
$\Om_K$ such that
\[
\<\log(\psi_1(\id{p}_j)), \ldots,\log(\psi_n(\id{p}_j))>_{j=1}^{n'} = (\ZZ/p\ZZ)^n.
\]
Then for every non trivial character $\psi$ of $Cl(\Om_K,\id{m})$ of
order $p$, $\psi(\id{p}_j) \neq 1$ for some $1 \le j \le n$.
\label{basischoice}
\end{prop}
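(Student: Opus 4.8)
The plan is to unwind the statement by dualizing: a character $\psi$ of $Cl(\Om_K,\id{m}_K)$ of order $p$ is an element of the $\F_p$-vector space $W := \Hom(Cl(\Om_K,\id{m}_K), \ZZ/p\ZZ)$, and the $\set{\psi_i}_{i=1}^n$ form a basis of $W$. The hypothesis $\<\log(\psi_1(\id{p}_j)), \ldots, \log(\psi_n(\id{p}_j))>_{j=1}^{n'} = (\ZZ/p\ZZ)^n$ says that the vectors $w_j := (\psi_1(\id{p}_j), \ldots, \psi_n(\id{p}_j)) \in (\ZZ/p\ZZ)^n$ span the whole space; equivalently, the classes of $\id{p}_j$ in the $p$-torsion quotient $Cl(\Om_K,\id{m}_K)/p\, Cl(\Om_K,\id{m}_K)$ span that quotient under the identification induced by the basis $\set{\psi_i}$. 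The conclusion is that no nonzero $\psi \in W$ can vanish on all the $\id{p}_j$ simultaneously.

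First I would fix a nontrivial order-$p$ character $\psi$ and write $\psi = \sum_{i=1}^n c_i \psi_i$ with $c_i \in \ZZ/p\ZZ$ not all zero, using that $\set{\psi_i}$ is a basis of $W$. Then for each $j$, $\log(\psi(\id{p}_j)) = \sum_{i=1}^n c_i \log(\psi_i(\id{p}_j))$ under the chosen identification; writing $c = (c_1, \ldots, c_n)$, this is the pairing $\langle c, w_j \rangle$ in $(\ZZ/p\ZZ)^n$. So $\psi(\id{p}_j) = 1$ for all $j$ would force $\langle c, w_j\rangle = 0$ for every $j$, i.e., $c$ lies in the orthogonal complement of the span of $\set{w_j}_{j=1}^{n'}$. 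But by hypothesis that span is all of $(\ZZ/p\ZZ)^n$, whose orthogonal complement is zero, so $c = 0$, contradicting the choice of $\psi$ as nontrivial.

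The only slightly delicate point — and the step I would treat most carefully — is checking that $\log(\psi(\id{p})) = \sum_i c_i \log(\psi_i(\id{p}))$ really holds under the chosen identification of $\log(\set{\xi_p^i})$ with $\ZZ/p\ZZ$. This is just the statement that the chosen branch of $\log$ composed with the identification is a group isomorphism $\mu_p \xrightarrow{\sim} \ZZ/p\ZZ$, so that $\log$ converts the multiplicative relation $\psi(\id{p}) = \prod_i \psi_i(\id{p})^{c_i}$ into the additive one; this is immediate once one notes $\mu_p$ is cyclic of order $p$ and any such identification is by definition an isomorphism onto $\ZZ/p\ZZ$. Everything else is linear algebra over $\F_p$: a set of vectors spans $(\ZZ/p\ZZ)^n$ if and only if the only covector annihilating all of them is $0$. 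I would close by remarking that in applications one simply searches for prime ideals $\id{p}_j$ until the associated vectors span, which is guaranteed to terminate by Chebotarev, since the Frobenius classes equidistribute and in particular hit every class of $Cl(\Om_K,\id{m}_K)/p\,Cl(\Om_K,\id{m}_K)$.
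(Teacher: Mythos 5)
Your proof is correct and is essentially the paper's own argument: the paper writes $\psi = \prod_i \psi_i^{\varepsilon_i}$ multiplicatively, applies $\log$ to convert $\psi(\id{p}_j)=1$ into the homogeneous linear system with coefficient rows $(\log\psi_1(\id{p}_j),\dots,\log\psi_n(\id{p}_j))$, and concludes from the spanning hypothesis that the matrix has full rank, forcing $\varepsilon=0$. You phrase the same linear algebra dually (``the orthogonal complement of a spanning set is zero'') and additively, and you flag explicitly the routine point that the fixed identification $\log:\mu_p\xrightarrow{\sim}\ZZ/p\ZZ$ is a group isomorphism, which the paper leaves implicit; otherwise the two proofs coincide.
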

\begin{proof}
Suppose that $\psi(\id{p}_j)=1$ for $1\le j \le m$. Since
$\set{\psi_i}_{i=1}^n$ is a basis, there exists exponents
$\varepsilon_i$ such that
\[
\psi = \prod_{i=1}^n \psi_i^{\varepsilon_i}
\]
Taking logarithm and evaluating at $\id{p}_j$ we see that
$(\varepsilon_1, \ldots \varepsilon_n)$ is a solution of the
homogeneous system
\[
\left(\begin{array}{ccc}
\log(\psi_1(\id{p}_1)) & \ldots & \log(\psi_n(\id{p}_1))\\
\vdots & &\vdots\\
\log(\psi_1(\id{p}_m)) & \ldots & \log(\psi_n(\id{p}_m))\end{array}
\right ).
\]
Since
$\set{(\log(\psi_1(\id{p}_j)),\ldots,\log(\psi_n(\id{p}_j)))}_{j=1}^m$
span $(\ZZ/p\ZZ)^n$, the matrix has maximal rank, hence
$\varepsilon_i=0$ and $\psi$ is the trivial character.
\end{proof}
\begin{remark}
A set of prime ideals satisfying the conditions of the previous
Proposition always exists by Tchebotarev's density theorem.
\end{remark}

\subsection{Residual image isomorphic to $S_3$} 

\begin{remark} If the residual representation is absolutely
irreducible, we can apply a descent result (see Corollaire 5 in
\cite{serre3}, which can be applied because the Brauer group of a
finite field is trivial) and conclude that since the traces are all in
$\FF_2$ the representation can be defined (up to isomorphism) as a
representation with values on a two-dimensional $\FF_2$-vector
space. Thus, the image can be assumed to be contained in
$\GL_2(\FF_2)$ and because of the absolute irreducibility assumption
we conclude that the image has to be isomorphic to $S_3$.
\end{remark}

Furthermore, we have the following result,

\begin{thm} If the image is absolutely ireducible, then the field $E$
  can be taken to be $\Q_2$.
\end{thm}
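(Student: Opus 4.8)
The plan is to show that the Galois representation $\rho_{\pi,2}$, which a priori is defined over the ring of integers of a finite extension $E_{\bfP}$ of $\QQ_2$, is conjugate into $\GL_2(\QQ_2)$ whenever its residual representation $\tilde\rho_{\pi,2}$ is absolutely irreducible. First I would invoke the descent result already recorded in the remark above (Corollaire~5 of \cite{serre3}): since the residual traces lie in $\FF_2$ and the Brauer group of a finite field is trivial, the residual representation $\tilde\rho_{\pi,2}$ can be realized on a two-dimensional $\FF_2$-vector space, so its image is $\GL_2(\FF_2)\cong S_3$. The strategy for the characteristic-zero statement is the same in spirit: one shows that the obstruction to descending $\rho_{\pi,2}$ from $E_{\bfP}$ to $\QQ_2$ lives in a Brauer group that vanishes.

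The key steps, in order, are as follows. (1) All traces $\trace\rho_{\pi,2}(\Frob_{\id p})$ agree with Hecke eigenvalues of $\pi$, hence (by the theorem of Harris--Soudry--Taylor, Taylor and Berger--Harcos, Theorem~\ref{Harris-Taylor}) they are rational integers, so $\trace\rho_{\pi,2}$ takes values in $\QQ_2$; by Chebotarev the characteristic polynomials of all Frobenii, and hence of every element of the image, have coefficients in $\QQ_2$. (2) Because $\tilde\rho_{\pi,2}$ is absolutely irreducible, $\rho_{\pi,2}$ itself is absolutely irreducible, so by the theory of the Schur index / Wedderburn theory the $E_{\bfP}$-algebra generated by the image is a central simple $\QQ_2$-algebra of dimension $4$ (using that all traces are in $\QQ_2$ to see that $\QQ_2$ is the center). (3) This central simple algebra is either $M_2(\QQ_2)$ or the quaternion division algebra over $\QQ_2$; in the first case we are done, so we must rule out the second. (4) To rule it out, use the explicitly chosen split primes $\bfP_i$ from the previous theorem: since $\alpha_{\bar{\bfP_i}}+\beta_{\bar{\bfP_i}}\neq 0$ and $E=\QQ[\alpha_{\bfP_i}]$ has residue degree $1$ at $2$, Taylor's argument (Remark~\ref{Taylor}) already forces the image into $\GL_2(\FF_2)$ after reduction, and one lifts this: a Frobenius whose characteristic polynomial is separable mod $2$ and split over $\QQ_2$ generates a split �tale subalgebra $\QQ_2\times\QQ_2$, which cannot embed in the quaternion division algebra. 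Hence the algebra is $M_2(\QQ_2)$, and $\rho_{\pi,2}$ is conjugate (over $\bar\QQ_2$, hence over $\QQ_2$ since the algebra is already $M_2(\QQ_2)$) into $\GL_2(\QQ_2)$.

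The main obstacle I anticipate is Step~(4): controlling the local Schur index at $2$. It is not automatic that an absolutely irreducible two-dimensional representation with $\QQ_2$-rational traces is realizable over $\QQ_2$ — a priori the quaternion division algebra over $\QQ_2$ could occur — so one genuinely needs the input that the image contains an element with split semisimple characteristic polynomial over $\QQ_2$ (equivalently, a non-trivial split torus), which kills the division-algebra case. The delicate point is ensuring such an element is present; this is exactly what the existence of the primes $\bfP_i$ with the stated non-vanishing and residue-degree conditions provides, together with the fact that $\E$ (and hence, by the trace comparison, $\pi$) has no extra symmetry forcing $\alpha+\beta=0$. Once that element is in hand, the Brauer-group argument is formal: $\mathrm{Br}(\QQ_2)[2]$ is generated by the quaternion class, and the presence of a split subalgebra $\QQ_2\times\QQ_2$ shows the class is trivial, so $E$ may be taken to be $\QQ_2$.
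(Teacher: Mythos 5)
Your overall framework — view the $\QQ_2$-subalgebra generated by the image as a central simple $\QQ_2$-algebra of dimension $4$ and kill its Brauer class — is a legitimate alternative to what the paper actually does, but Step~(4) has a genuine gap. You assert that the split primes $\bfP_i$ from the preceding theorem furnish a Frobenius whose characteristic polynomial is separable mod~$2$ and split over $\QQ_2$, hence a copy of $\QQ_2\times\QQ_2$ inside the algebra. But those primes were chosen for the \emph{opposite} reason: the proof of that theorem picks $\bfP_1,\bfP_2$ so that $\QQ_2[\alpha_{\bfP_i}]$ is a \emph{ramified quadratic} extension of $\QQ_2$ (the roots of $x^2+14$ and $x^2+6$). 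In particular $\alpha_{\bfP_i}\notin\QQ_2$, so the characteristic polynomial of $\Frob_{\bfP_i}$ does \emph{not} split over $\QQ_2$, and since its trace is even (reducing to $(x+1)^2$ mod~$2$) it is not separable mod~$2$ either. So the split torus you want is not supplied by the paper's choice of primes, and nothing else in your argument produces one; there is no reason to expect a Frobenius with $\QQ_2$-split semisimple characteristic polynomial to exist a priori (e.g.\ the odd-trace Frobenii have discriminant $\equiv 5\pmod 8$, also non-split).

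A correct Brauer-theoretic route would instead kill the quaternion class using the \emph{residual} absolute irreducibility: if the algebra were the division algebra $D/\QQ_2$, the image of $\rho$ would be a compact subgroup of $D^\times$, hence contained in the unit group of the unique maximal order $\mathcal O_D$, whose residue ring $\mathcal O_D/\mathfrak m_D\cong\FF_4$ is a field; the residual representation would then factor through an abelian group, contradicting absolute irreducibility. The paper itself sidesteps this entirely by invoking a descent lemma of Carayol type (the cited Corollary on p.~256 of \cite{FLT}): once the residual representation is known to be absolutely irreducible and defined over $\FF_2$ (by the triviality of $\mathrm{Br}(\FF_2)$, as in the preceding Remark), and the traces lie in $\ZZ_2$, the full $2$-adic representation can be conjugated into $\GL_2(\ZZ_2)$. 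That argument is shorter and avoids the need for any split Frobenius; if you want to keep your structure, replace Step~(4) with the $\mathcal O_D^\times$/residue-field observation above.
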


\begin{proof} This follows from the same argument as the previous
  Remark. See also Corollary of \cite{FLT}, page $256$. 
\end{proof}

\begin{remark}
Once we prove that the residual representation of $\rho_{\pi,2}$ has
image greater than $C_3$ we automatically know that it can be defined
on $\GL_2(\Z_2)$.
\end{remark}

We have the $2$-adic Galois representations $\rho_\E$ and $\rho_f$ and
we want to prove that they are isomorphic. We start by proving that
the reduced representations are isomorphic. For doing this we compute
all quadratic extensions of $K$ using Class Field theory and
Proposition \ref{cohen}. Let $K_\E$ denote the quadratic extension of
$K$ contained on $L_\E$. Following the ideas of step $(5)$ of the
previous case, we can prove that $L_f$ (the fixed field of the kernel
of $\rho_f$) contains no quadratic extension of $K$ or contains $K_\E$
(note that an ideal with inertial degree $3$ on $L_\E$ splits on
$K_\E$). This is done on steps $(1)-(4)$.
\begin{remark}
Let $P(x)$ denote the degree $3$ polynomial in $K[x]$ whose roots are
the $x$-coordinates of the points of order $2$ of $\E$. The fact that
the splitting field of $P(x)$ is an $S_3$ extension allows us to
compute how primes decompose on $K_\E$ knowing how they decompose on
the cubic extension $K_P$ of $K$ obtained by adjoining any root of
$P(x)$. The factorization as well as the values of $\psi(\id{p})$ are
given by the next table:
\[
\begin{tabular}{|c|c|c|r|}
\hline
$\id{p} \Om_{K_P}$ & $\id{p} \Om_{K_\E}$ & $\id{p} \Om_{L_\E}$ & $\psi(\id{p})$\\
\hline
$\id{p}_1 \id{p}_2 \id{p}_3$ & $\id{q}_1 \id{q}_2$ &$\id{t}_1\ldots \id{t}_6$ & $1$\\
$\id{p}_1 \id{p}_2$ & $\id{p}$ & $\id{t}_1 \id{t}_2 \id{t}_3$ & $-1$\\
$\id{p}$ & $\id{q}_1 \id{q}_2$ & $\id{t}_1 \id{t}_2$ &$1$\\
\hline
\end{tabular}
\]
\begin{proof} The last two cases are clear (since the inertial degree
is multiplicative and it is at most $3$). The not so trivial case is
the first one. Since $L_\E/K$ is Galois, $\id{p} \Om_{L_\E}$ has $3$ or
$6$ prime factors. Assume 
\begin{equation}
\id{p} \Om_{L_\E} = \id{q}_1 \id{q}_2 \id{q}_3. 
\label{cannot happen}
\end{equation}
Then it must be the case that (after relabeling the ideals
if needed) if $\sigma$ denotes one order three element in
$\Gal(L_\E/K)$, $\sigma(\id{q}_1) = \id{q}_2$ and $\sigma^2(\id{q}_1) =
\id{q}_3$. Since the decomposition groups $D(\id{q}_i|\id{p})$ have
order $2$ and are conjugates of each other by powers of $\sigma$, they
are disjoint and they are all the order $2$ subgroups of $S_3$. Since
$K_P$ is a degree $2$ subextension of $L_\E$, it is the fixed field of
an order $2$ subgroup. Without loss of generality, assume $K_P$ is the
fixed field of $D(\id{q}_1|\id{p})$. If
we intersect equation $(\ref{cannot happen})$ with $\Om_{K_P}$ we get
\[
\id{p} \Om_{K_P} = (\id{q}_1 \cap \Om_{K_P})(\id{q}_2 \cap \Om_{K_P})(\id{q}_3 \cap \Om_{K_P}).
\]
We are assuming that $(\id{q}_i \cap \Om_{K_P}) \neq (\id{q}_j \cap
\Om_{K_P})$ if $i \neq j$. Let $\tau$ be the non trivial element on
$D(\id{q}_1|\id{p})$, so $\tau$ acts trivially on $K_P$. In particular,
$\tau$ fixes $\id{q}_2 \cap \Om_{K_P}$ and $\tau(\Om_{L_\E}) =
\Om_{L_\E}$ then $\tau(\id{q_2}) = \tau((\id{q}_2 \cap \Om_{K_P})
\Om_{L_\E}) = \id{q_2}$ which contradicts that $D(\id{q}_1|\id{p}) \cap
D(\id{q}_2|\id{p}) = \{1\}$.
\end{proof}
\label{rem}
\end{remark}
Next we need to discard the $C_3$ case. Let $\id{m}_K$ be as described
on step $(1)$ of the algorithm, and $Cl(\Om_K,\id{m}_K)$ be the ray
class field. Suppose that $\tilde{\rho}_f$ has image isomorphic to
$C_3$. Let $\chi$ be (one of) the cubic character of
$Cl(\Om_K,\id{m}_K)$ corresponding to $L_f$. Let
$\{\chi_i\}_{i=1}^m$ be a basis of cubic characters of
$Cl(\Om_K,\id{m}_K)$. We look for prime ideals $\{\id{p}_j\}_{j=1}^{m'}$
that are inert on $K_\E$ or split completely on $L_\E$ (that is, they have
order $1$ or $2$ on $S_3$ and in particular have even trace for the
residual representation $\tilde{\rho}_\E$) and such that
$\<(\log(\chi_1(\id{p}_j)),\ldots,\log(\chi_m(\id{p}_j)))>_{j=1}^{m'}
=(\ZZ/3\ZZ)^m$. There exists such ideals by Tchebotarev's density
Theorem. By Proposition \ref{basischoice}, there exists an index $i_0$
such that $\chi(\id{p}_{i_0}) \neq 1$, hence
$\trace(\tilde{\rho}_f(\id{p}_{i_0})) \equiv 1 \pmod 2$ while
$\trace(\tilde{\rho}_\E(\id{p}_{i_0})) \equiv 0 \pmod 2$. Step $(6)$
discards this case.

Once we know that $\tilde{\rho}_f$ has $S_3$ image with the same
quadratic subfield as $\tilde{\rho}_\E$, we take $K_\E$ as the base
field and proceed in the same way as the previous case. This is done
on steps $(7)$ and $(8)$. 

At this point we already decided whether the two residual
representations are isomorphic or not. If they are, we can apply
Faltings-Serre's method explained on the previous section. We look for
quadratic extensions $\tilde L$ of $L$ unramified outside $\id{m}_L$ 
such that its normal closure is isomorphic to $S_4$ or $S_3 \times
C_2$. 

Applying Faltings-Serre's method, we need to compute all fields
$\tilde L$ with Galois group $\Gal(\tilde L/K) \simeq S_4 \times C_2$
in the $S_3$ case and $\Gal(\tilde L/K) \simeq A_4 \times C_2$ in the
$C_3$ case. The group $S_4 \times C_2$ fits in the exact sequences

\[
1 \rightarrow C_2 \times C_2 \rightarrow S_4 \times C_2 \rightarrow
S_3 \times C_2 \rightarrow 1
\]
and
\[
1 \rightarrow C_2  \rightarrow S_4 \times C_2 \rightarrow S_4 \rightarrow 1.
\]
The group $A_4 \times C_2$ fits in the exact sequence
\[
1 \rightarrow C_2 \times C_2 \rightarrow A_4 \times C_2 \rightarrow
C_3 \times C_2 \rightarrow 1.
\]
Every element of order $4$ or $6$ on $S_4 \times C_2$ maps to an
element of order $4$ on $S_4$ or to an element of order $6$ on $S_3
\times C_2$ under the previous surjections, while any element of order
$6$ on $A_4 \times C_2$ maps to an element of order $6$ on $C_3 \times
C_2$.  We can restrict ourselves to compute normal extensions of $L$ with
Galois group $S_4$ or $S_3 \times C_2$ in the $S_3$ case and normal
extensions of $L$ with Galois group $C_3 \times C_2$ on the $C_3$
case. Note that in all cases the extensions are obtained by computing
the normal closure of a quadratic extension. 

Let $\id{m}_L$ be a modulus on $L$ invariant under the action of
$\Gal(L/K)$. Then $\Gal(L/K)$ has an action on $Cl(\Om_L,\id{m}_L)$ and it
induces an action on the set of characters of the group. Concretely,
if $\psi$ is a character on $Cl(\Om_L,\id{m}_L)$ and $\sigma \in
\Gal(L/K)$, $\sigma . \psi = \psi \circ \sigma$.
\begin{lemma} If $\psi$ is a character on $Cl(\Om_L,\id{m}_L)$ that
  corresponds to the quadratic extension $L[\sqrt{\alpha}]$ and
  $\sigma \in \Gal(L/K)$ then $\sigma^{-1}.\psi$ corresponds to
  $L[\sqrt{\sigma(\alpha})]$. 
\end{lemma}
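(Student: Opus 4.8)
The plan is to unwind the correspondence between characters of the ray class group $Cl(\Om_L,\id{m}_L)$ of order $2$ and Kummer extensions of $L$, and to track what the Galois action on characters does on the Kummer side. Recall that a quadratic extension $L[\sqrt{\alpha}]/L$ corresponds, via class field theory, to the character $\psi$ whose kernel is the norm subgroup; concretely, for a prime $\id{q}$ of $L$ coprime to $\id{m}_L$ and to $\alpha$, the value $\psi(\id{q})$ is $+1$ or $-1$ according to whether $\alpha$ is or is not a square modulo $\id{q}$, i.e. according to whether $\id{q}$ splits or is inert in $L[\sqrt{\alpha}]$. This is the reciprocity-map description of $\psi$, and it is the characterization I will use, since it makes the Galois action transparent.

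First I would fix $\sigma \in \Gal(L/K)$ and consider the field $\sigma(L[\sqrt{\alpha}]) = L[\sqrt{\sigma(\alpha)}]$ (here $\sigma$ is extended to an automorphism of $\bar\QQ$, and the image is independent of the extension since $L[\sqrt{\alpha}]/K$ need not be Galois but the set of its $\Gal(L/K)$-conjugates is well defined). Denote by $\psi'$ the quadratic character of $Cl(\Om_L,\id{m}_L)$ attached to $L[\sqrt{\sigma(\alpha)}]$; note this uses that $\id{m}_L$ is $\Gal(L/K)$-invariant, so the conductor of $L[\sqrt{\sigma(\alpha)}]$ again divides $\id{m}_L$ and $\psi'$ is defined on the same ray class group. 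I then compute $\psi'(\id{q})$ for a prime $\id{q} \nmid \id{m}_L$: by the description above, $\psi'(\id{q}) = 1$ iff $\id{q}$ splits in $L[\sqrt{\sigma(\alpha)}]$ iff $\sigma^{-1}(\id{q})$ splits in $L[\sqrt{\alpha}]$ (apply $\sigma^{-1}$, which carries $L[\sqrt{\sigma(\alpha)}]$ to $L[\sqrt{\alpha}]$ and $\id{q}$ to $\sigma^{-1}(\id{q})$) iff $\psi(\sigma^{-1}(\id{q})) = 1$. Hence $\psi'(\id{q}) = \psi(\sigma^{-1}(\id{q})) = (\sigma^{-1}.\psi)(\id{q})$ for all but finitely many primes $\id{q}$.

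Finally, since a character of the ray class group is determined by its values on primes not dividing the modulus (the classes of such primes generate $Cl(\Om_L,\id{m}_L)$, by Tchebotarev or by the surjectivity of the Artin map on prime ideals), the equality $\psi' = \sigma^{-1}.\psi$ on almost all primes forces $\psi' = \sigma^{-1}.\psi$ as characters. That is exactly the claim: $\sigma^{-1}.\psi$ corresponds to $L[\sqrt{\sigma(\alpha)}]$. The main thing to be careful about — the only real obstacle — is the bookkeeping around the direction of the action: one must check that it is $\sigma^{-1}$ and not $\sigma$ that appears, which comes down to the fact that $\sigma.\psi = \psi\circ\sigma$ by definition, so $(\sigma.\psi)(\id{q}) = \psi(\sigma(\id{q}))$, and matching this against the splitting computation above pins down the inverse. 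A secondary point worth a line is the invariance of the conductor under $\sigma$, which is where the hypothesis that $\id{m}_L$ is $\Gal(L/K)$-invariant is used; without it $\psi'$ might a priori live on a different ray class group.
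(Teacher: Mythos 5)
Your proof is correct and takes essentially the same route as the paper: characterize the quadratic character by the splitting of unramified primes, translate splitting to the Kummer condition that $\alpha$ is a square modulo $\id{q}$, apply $\sigma$ to move between the two Kummer extensions, and match conventions to pin down that the action is by $\sigma^{-1}$. Your version is somewhat more explicit (noting the $\Gal(L/K)$-invariance of $\id{m}_L$ so that both characters live on the same ray class group, and invoking Tchebotarev to conclude from agreement at almost all primes), but the underlying argument is the paper's.
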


\begin{proof} The character is characterized by its value on non-ramified
  primes. Let $\id{p}$ be a non-ramified prime on
  $L[\sqrt{\alpha}]/L$. It splits on $L[\sqrt{\alpha}]$ if and
  only if $\psi(\id{p}) = 1$. If $\id{p}$ does not divide the
  fractional ideal $\alpha$, this is equivalent to $\alpha$ being a square modulo
  $\id{p}$. But for $\sigma \in \Gal(L/K)$, $\alpha$ is a square
  modulo $\id{p}$ if and only if $\sigma(\alpha)$ is a square modulo
  $\sigma(\id{p})$ hence the extension $L[\sqrt{\sigma(\alpha)}]$
  corresponds to the character $\sigma^{-1} . \psi$.
\end{proof}

\begin{prop} Let $L/K$ be a Galois extension with $\Gal(L/K) \simeq
  S_3$ and $\psi$ a quadratic character of $Cl(\Om_L,\id{m}_L)$ with
  $\id{m}_L$ as above.
\label{Serre-s3}
\begin{enumerate}
\item The quadratic extension of $L$ corresponding to $\psi$ is Galois
if and only if $\sigma .\psi = \psi$ for all $\sigma \in \Gal(L/K)$.
\item The quadratic extension of $L$ corresponding to $\psi$ has
  normal closure isomorphic to $S_4$ if and only if the elements
  fixing $\psi$ form an order $2$ subgroup and $(\psi)(\sigma.\psi) =
  \sigma^2. \psi$, where $\sigma$ is any order $3$ element in $\Gal(L/K)$.
\end{enumerate}
\end{prop}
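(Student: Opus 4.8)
The plan is to analyze the possible Galois groups of the normal closure of a quadratic extension $L[\sqrt{\alpha}]/K$ where $\Gal(L/K) \simeq S_3$, and to translate the group-theoretic conditions into statements about the action of $\Gal(L/K)$ on the quadratic character $\psi$. First I would set up the basic picture: the normal closure $\tilde{L}$ of $L[\sqrt{\alpha}]$ over $K$ is the compositum of the conjugates $L[\sqrt{\sigma(\alpha)}]$ as $\sigma$ ranges over $\Gal(L/K)$, and by the Lemma just proved, $L[\sqrt{\sigma(\alpha)}]$ corresponds to the character $\sigma^{-1}.\psi$. So $\Gal(\tilde{L}/L)$ is the subgroup of $\mathrm{Hom}(Cl(\Om_L,\id{m}_L),\{\pm1\})$ generated by the orbit $\{\sigma.\psi : \sigma \in \Gal(L/K)\}$, and $\Gal(\tilde{L}/K)$ is an extension of $\Gal(L/K) \simeq S_3$ by this elementary abelian $2$-group $A := \la \sigma.\psi \ra$. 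The size and structure of $A$ as an $\mathbb{F}_2[S_3]$-module is governed by the stabilizer $H := \{\sigma \in \Gal(L/K) : \sigma.\psi = \psi\}$ and by the linear relations among the $\sigma.\psi$.

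For part (1): the quadratic extension $L[\sqrt{\alpha}]/K$ is Galois (equivalently, $L[\sqrt{\alpha}] = \tilde{L}$) exactly when all conjugates $L[\sqrt{\sigma(\alpha)}]$ coincide with $L[\sqrt{\alpha}]$, i.e.\ when $\sigma^{-1}.\psi = \psi$ for all $\sigma$, which is the same as $\sigma.\psi = \psi$ for all $\sigma \in \Gal(L/K)$. I would spell out that distinct quadratic extensions of $L$ correspond to distinct characters, so the equality of fields is literally the equality of characters.

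For part (2): I want to show $\Gal(\tilde{L}/K) \simeq S_4$ precisely under the stated conditions. The key observation is that $S_4$ has a normal subgroup $V \simeq C_2 \times C_2$ with quotient $S_3$, and $V$ is the unique $2$-dimensional $\mathbb{F}_2[S_3]$-module on which the $C_3$ acts without fixed points (the "standard" module); concretely, in the notation of the Faltings--Serre section, $V$ has basis $v_1, v_2$ with $v_3 = v_1 + v_2$ and $\sigma$ permuting $v_1, v_2, v_3$ cyclically while a transposition swaps two of them. So I would argue: if the stabilizer $H$ of $\psi$ has order $2$, then the orbit $\{\sigma.\psi\}$ has size $3$, say $\psi, \sigma.\psi, \sigma^2.\psi$ for an order-$3$ element $\sigma$; the condition $\psi \cdot (\sigma.\psi) = \sigma^2.\psi$ says exactly that these three characters satisfy the Klein-four relation (their product is trivial, and any two generate the third), so $A = \la \psi, \sigma.\psi\ra$ is $2$-dimensional and carries the standard $S_3$-action. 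Then $\Gal(\tilde{L}/K)$ is an extension $1 \to V \to \Gal(\tilde{L}/K) \to S_3 \to 1$ with $S_3$ acting in the standard way; since $H^2(S_3, V) = 0$ for this module (or, more elementarily, since the extension is realized as a splitting field and one can exhibit a complement, e.g.\ a decomposition group of a prime inert of degree $2$), the extension splits and $\Gal(\tilde{L}/K) \simeq V \rtimes S_3 \simeq S_4$. Conversely, if $\Gal(\tilde{L}/K) \simeq S_4$, then $\Gal(\tilde{L}/L) = V$ is $2$-dimensional, forcing the orbit to span a $2$-dimensional space, hence $H$ has order $2$ (orbit of size $3$) or order $3$ (orbit of size $2$); the order-$3$ case would give $\Gal(\tilde{L}/K)$ of order $12$, not $24$, so $|H| = 2$, and the Klein-four relation among the three conjugates is forced by the module structure of $V$, giving $\psi \cdot (\sigma.\psi) = \sigma^2.\psi$.

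The main obstacle I expect is the splitting of the group extension in the forward direction of (2): knowing $A$ is the standard $2$-dimensional module does not by itself pin down $\Gal(\tilde{L}/K)$ among extensions of $S_3$ by $V$ unless one checks $H^2(S_3,V) = 0$, or sidesteps it by producing an explicit complement. I would handle this either by the cohomology computation ($H^2(S_3,V)=0$ for the standard module, e.g.\ because restriction to the Sylow-$3$ subgroup is injective on the relevant part and $V$ has no $C_3$-invariants, combined with the Sylow-$2$ analysis) or, in keeping with the computational flavor of the paper, by remarking that the candidate extension always does occur as a genuine Galois group and that the number-theoretic data (ray class group computations) will distinguish $S_4$ from the non-split possibilities when they are checked on Frobenius elements. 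The remaining verifications — that $L[\sqrt{\alpha}]$ Galois over $K$ with group $C_2 \times C_2$ corresponds to $|H| = 3$, etc. — are routine once the module dictionary is in place, so I would only sketch them.
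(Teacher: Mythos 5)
Your approach matches the paper's: identify the normal closure $\tilde L$ as the compositum of the conjugate quadratic extensions, translate to the $\Gal(L/K)$-orbit of $\psi$, and analyze the extension $1 \to V \to \Gal(\tilde L/K) \to S_3 \to 1$ where $V = \Gal(\tilde L/L)$. You go further than the paper in one substantive respect: the paper's own proof simply asserts that this extension is a semidirect product, whereas you correctly flag that this needs an argument, and your computation that $H^2(S_3,V)=0$ for the standard module ($V$ is projective over $\F_2[C_3]$ since $3$ is odd, and the transposition's norm map surjects onto its one-dimensional fixed line) is exactly the missing ingredient — note it would fail for other actions, e.g.\ the trivial $S_3$-action on $V$, so the splitting is not a formal consequence of the exact sequence alone. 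One small slip in your converse: if the stabilizer of $\psi$ had order $3$, the two characters in the orbit would still be independent (distinct involutions whose product is a third nontrivial character), so $[\tilde L:K]$ would be $24$ rather than $12$; that case is excluded because a $3$-cycle would then act trivially on $V$, hence the resulting order-$24$ group could not be $S_4$.
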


\begin{proof} Let $L[\sqrt{\alpha}]$ be a quadratic extension of
$L$. The normal closure (with respect to $K$) is the field 
\[
\tilde L = \prod _{\sigma \in \Gal(L/K)} L[\sqrt{\sigma(\alpha)}]
\]
(where by the product we mean the smallest field containing all of
them inside $\bar \QQ$). In particular $\Gal(\tilde
L/L)$ is an abelian $2$-group. By the previous proposition, if
$L[\sqrt{\alpha}]$ corresponds to the quadratic character $\psi$ then
the other ones correspond to the characters $\sigma . \psi$ where
$\sigma \in \Gal(L/K)$.

The first assertion is clear. To prove the second one, the condition
$(\psi)(\sigma.\psi) = \sigma^2 \psi$ and $\psi$ being fixed by an order
$2$ subgroup implies that $[\tilde L : L] = 4$. Hence the group
$\Gal(\tilde{L}/K)$ fits in the exact sequence
\[
1 \rightarrow C_2 \times C_2 \rightarrow \Gal(\tilde{L}/K) \rightarrow
S_3 \rightarrow 1.
\]
In particular $\Gal(\tilde{L}/K)$ is isomorphic to the semidirect
product $S_3 \ltimes (C_2 \times C_2)$, with the action given by a
morphism $\Theta: S_3 \rightarrow GL_2(\FF_2)$. Its kernel is a normal
subgroup, hence it can be $GL_2(\FF_2)$ (i.e. the trivial action),
$\<\sigma>$ (the order $3$ subgroup) or trivial. The condition on the
stabilizer of $\psi$ forces the image of $\Theta$ to contain an order
$3$ element, hence the kernel is trivial. Up to inner automorphisms,
there is a unique isomorphism from $GL_2(\FF_2)$ to itself (and
morphisms that differ by an inner automorphism give isomorphic groups)
hence $\Gal(\tilde{L}/K) \simeq S_4$ as claimed.
\end{proof}

\begin{remark}
On the $S_4$ case of the last proposition, the condition on the action
of $\sigma$ is necessary. Consider the extension $L =
\Q[\xi_3,\sqrt[3]{2}]$ where $\xi_3$ is a primitive third root of
unity. It is a Galois degree $6$ extension of $\Q$ with Galois group
$S_3$. Take as generators for the Galois group the elements $\sigma,
\tau$ given by
\begin{eqnarray*}
\sigma: \xi_3 \mapsto  \xi_3 & \text{ and } & \sigma: \sqrt[3]{2}
\mapsto \xi_3 \sqrt[3]{2} \\
\tau: \xi_3 \mapsto  \xi_3^2 & \text{ and } & \tau: \sqrt[3]{2}
\mapsto \sqrt[3]{2} \\
\end{eqnarray*}
The extension $L\left [\sqrt{1+\sqrt[3]{2}}\right ]$ is clearly fixed by $\sigma$,
but its normal closure has degree $8$ over $L$ since
$\sqrt{1+\xi_3^2\sqrt[3]{2}}$ is not on the field
$L\left [\sqrt{1+\sqrt[3]{2}},\sqrt{1+\xi_3\sqrt[3]{2}}\right ]$ as
can be easily checked.
\end{remark}

To compute all such extensions, we use Class Field Theory and
Proposition \ref{Serre-s3}. To compute the $S_3 \times C_2$ extensions
we follow the method of the $C_3$ case. The only difference is that we
check for invariance under an order $3$ element plus invariance under
an order $2$ element. This is done on steps $(11)-(15)$.

At last, we need to compute the quadratic extensions whose normal
closure has Galois group isomorphic to $S_4$. Using the second part of
Proposition \ref{Serre-s3}, we need to compute quadratic characters
$\chi$ such that $\chi (\sigma.\chi) (\sigma^2.\chi) =1$ (where
$\sigma$ denotes an order three element of $\Gal(L/K)$) and also whose
fixed subgroup under the action of $\Gal(L/K)$ has order $2$. Let $S$ denote the set
of all such characters. Since $\sigma$ does not act trivially on
elements of $S$, we find that $\chi$, $\sigma.\chi$ and
$\sigma^2.\chi$ are three different elements of $S$ that give the same
normal closure. Then we can write $S$ as a disjoint union of three
sets. Furthermore, since $\sigma$ acts transitively (by multiplication
on the right) on the set of order $2$ elements of $S_3$, we see that
\[
S = V_{\tau} \cup V_{\tau \sigma} \cup V_{\tau \sigma^2}
\]
where $V_{\tau}$ denotes the quadratic characters of $S$ invariant
under the action of $\tau$ and the union is disjoint. Hence each one
of these sets is in bijection with all extensions $\tilde L$ of
$L$. We compute one subspace and use Proposition \ref{basischoice} on
this subspace, noting that the elements of order $4$ correspond to primes
that are inert on any of the three extensions of $L$ (corresponding
to $\chi$, $\sigma.\chi$ and $\sigma^2.\chi$) hence we consider not one
prime above $\id{p} \subset \Om_K$ but all of them. This is done on
steps $(16)-(19)$.

\subsection{Trivial residual image or residual image isomorphic to $C_2$}

The first step is to decide if we can take $E$ to be an extension of
$\QQ_2$ with residue field $\FF_2$ so as to apply Livne's Theorem
\ref{Livne}. Once this is checked, the algorithm is divided into two
parts. Let $\rho_\E,\rho_f:\Gal(\bar \Q/K)\to\GL_2(\Z_2)$ be given,
with the residual image of $\rho_\E$ being either trivial or
isomorphic to $C_2$. Steps $(2)$ to $(6)$ serve to the purpose of
seeing whether $\rho_f$ has also trivial or $C_2$ residual image or
not. Note that the output of step $(6)$ does not say that the residual
representations are actually the same, but they have isomorphic
semisimplifications (in this case it is equivalent to say that the
traces are even). For example, there can be isogenous curves, one of
which has trivial residual image and the other has $C_2$ residual
image.

Suppose we computed the ideals of steps $(2)-(5)$ and $\tilde \rho_f$ has
even trace on the Frobenius of these elements. We claim that $\rho_f$
has residual image either trivial or $C_2$. Suppose on the contrary
that $\rho_f$ has residual image isomorphic to $C_3$. Let $L_2/K$ be
the cyclic extension of $K$ corresponding (by Galois theory) to the
kernel of $\tilde \rho_f$. This corresponds to a cubic character
$\chi$ of $Cl(\Om_K,\id{m}_K)$. An easy calculation shows that if
$\id{p}\subset\Om_K$ is a prime ideal not dividing $\id{m}$, then
$\chi(\id{p})=1$ if and only if
$\trace(\tilde\rho_f(\Frob_\id{p}))=0$. This implies that
$\chi(\id{p}_j)=1$ for each $j=1,\dots,m$. But $\chi$ is a non-trivial
character, then by Proposition 4.6 we get a contradiction.

Similarly, suppose that the residual image of $\rho_2$ is $S_3$. Let
$L_2/K$ be the $S_3$ extension of $K$ corresponding (by Galois theory)
to the kernel of $\tilde\rho_f$, and $M_2/K$ its unique quadratic
subextension. The extension $L_2/M_2$ corresponds to a cubic character
$\chi$ of $Cl(\Om_{M_2},\id{m}_{M_2})$ and the proof follows the
previous case.

Steps $(7)-(10)$ check if the representations are indeed isomorphic
once we know that the traces are even using Theorem \ref{Livne}. We
need to find a finite set of primes $T$, which will only depend on
$K$, and check that the representations agree at those primes. In
the algorithm and in the theorem, we identify the group $\Gal(K_S/K)$
with the group of quadratic characters of $Cl(\Om_K,\id{m})$.  In step
$(7)$, we compute the image of $\Frob_\id{p}\in \Gal(K_S/K)$ via this
isomorphism and compute enough prime ideals so as to get a
non-cubic set of $\Gal(K_S/K)$. Then the representations are
isomorphic if and only if the traces at those primes agree.

\subsection{Residual image isomorphic to $C_3$}
Let $K$ be an imaginary quadratic field and let 
\[
\rho_\E,\rho_f: \Gal(\bar{\Q}/K) \rightarrow GL_2(\Q_2)
\]
be the Galois representations attached to $\E$ and $f$ respectively.

The first step is to decide if we can take $E$ to be an extension of
$\QQ_2$ with residue field $\FF_2$. Once this is checked, we need to
prove that the residual representation $\tilde{\rho}_f$ has image
isomorphic to $C_3$. For doing this we start proving that it has no
order $2$ elements on its image. If such an element exists, there
exists a degree $2$ extension of $K$ unramified outside $2
\id{n}(\E)\id{n}(f) \overline{\id{n}(f)} \Disc(K)$.  We use
Proposition \ref{cohen} and Class Field Theory to compute all such
extensions. Once a basis of the quadratic characters is chosen, we
apply Proposition \ref{basischoice} to find a set of ideals such that
for any quadratic extension, (at least) one prime $\id{q}$ on the set
is inert on it. Since the residual image is isomorphic to a subgroup
of $S_3$, $\tilde{\rho}_f(\id{q})$ has order exactly $2$. In
particular its trace is even. If $\trace(\tilde{\rho}_f(\id{p}))$ is
odd at all primes, $\im(\tilde{\rho}_f)$ contains no order $2$
elements. Also since $\trace(\identity) \equiv 0 \pmod{2}$ we see that
$\tilde{\rho}_f$ cannot have trivial image hence its image is
isomorphic to $C_3$. This is done on steps $(2)$ to $(5)$ of the
algorithm.

To prove that $\tilde{\rho}_f$ factors through the same field as
$\tilde{\rho}_\E$ we compute all cubic Galois extensions of $K$. This
can be done using Class Field Theory again, and this explains the
choice of the modulus on step $(1)$, so as to be used for both
quadratic and cubic extensions. Note that the characters $\chi$ and
$\chi^2$ give raise to the same field extension. If $\psi_\E$ denotes
(one of) the cubic character corresponding to $L_\E$, we extend it to a
basis $\{\psi_\E,\chi_i\}_{i=1}^m$ of the cubic characters of
$Cl(\Om_K,\id{m}_K)$ and find a set of prime ideals
$\{\id{p}_j\}_{j=1}^{m'}$ such that
$\<(\log(\chi_1(\id{p}_j)),\ldots,\log(\chi_n(\id{p}_j)))>_{j=1}^{m'}
=(\ZZ/3\ZZ)^m$ and $\psi_\E(\id{p}_j)=1$ for all $1 \le j \le m'$. 

If $\chi$ is a cubic character corresponding to $L_f$,
$\chi=\psi_\E^\varepsilon \varkappa$, where $\varkappa =
\prod_{i=1}^n\chi_i^ {\varepsilon_i}$. If $\varkappa = 1$, then $\chi
= \psi_\E$ or $\psi_\E^ 2$ and we are done. If not, by Proposition
\ref{basischoice}, there exists an index $i_0$ such that
$\varkappa(\id{p}_{i_0}) \neq 1$. Furthermore, since
$\psi_\E(\id{p}_{i_0})=1$, $\chi(\id{p}_{i_0}) \neq 1$. Hence
$\trace(\tilde{\rho}_f(\id{p}_{i_0})) \equiv 1 \pmod 2$ and
$\trace(\tilde{\rho}_\E(\id{p}_{i_0})) \equiv 0 \pmod 2$.

At this point we already decided whether the two residual
representations are isomorphic or not. If they are, we can apply
Livne's Theorem \ref{Livne} to the field $L_E$ which is the last step
of the algorithm.


\section{Examples}

In this section we present three examples of elliptic curves over
imaginary quadratic fields, one for each class of residual image and
show how the method works. The first publications comparing elliptic
curves with modular forms over imaginary quadratic fields are due to
Cremona and Whitley (see \cite{cremona-whitley}), where they consider imaginary
quadratic fields with class number $1$. The study was continued by
other students of Cremona. We consider some examples of Lingham's
Ph.D. thesis where examples are computed for quadratic fields with
class number $3$ to show that our computations work on general
situations. For all examples, we take $K = \QQ[\sqrt{-23}]$ and we
denote $\omega = \frac{1+\sqrt{-23}}{2}$.

All computations were done using the PARI/GP system (\cite{PARI2}).
On the next section we present the commands used to check our examples
so as to serve as a guide for further cases. The routines written by
the authors can be downloaded from \cite{CNT}.

\subsection{Image isomorphic to $S_3$.} Let $\E$ be the elliptic
curve with equation
\[
\E:y^2+\omega x y + y = x^3+(1-\omega)x^2-x
\]
It has conductor $\id{n}_\E = \bar{\id{p}}_2 \id{p}_{13}$ where
$\bar{\id{p}}_2 = \<2,1-\omega>$ and $\id{p}_{13} =
\<13,8+\omega>$. There is an automorphic form of level
$\id{n}_f=\bar{\id{p}}_2 \id{p}_{13}$ (denoted by $f_2$ on
\cite{lingham} table $7.1$) which is the candidate to correspond to
$\E$. We know that $f$ has a $2$-adic Galois representation attached
whose $L$-series local factors agree at all primes except (at most)
$\set{\id{p}_{23},\bar{\id{p}}_2,\id{p}_2,\id{p}_{13},\bar{\id{p}}_{13}}$. Let
$\rho_\E$ be the $2$-adic Galois representation attached to $\E$. Its
residual representation has image isomorphic to $S_3$ as can easily be
checked by computing the extension $L_\E$ of $K$ obtained adding the
coordinates of the $2$-torsion points.
Using the routine {\tt setofprimes} we find that the set of primes of
$\QQ[\sqrt{-23}]$ above $\set{3, 5, 7, 29, 31, 41, 47}$ is enough for
proving that the residual representations are isomorphic and that the
$2$-adic representations are isomorphic as well. Note that the natural
answer would be the set $\set{3, 5, 7, 11, 19, 29, 31, 37}$, but since
some of these ideals have norm greater than $50$, they are not on
table $7.1$ of \cite{lingham}. This justifies our first list.

To prove that the answer is correct, we apply the algorithm
described on section \ref{s3-case}:
\begin{enumerate}
\item Since $2$ is unramified on $K/\QQ$, the modulus is $\id{m}_K =
2^3 13\sqrt{-23}$.
\item The ray class group is isomorphic to $C_{396} \times C_{12}
  \times C_2 \times C_2 \times C_2 \times C_2$. Using Remark \ref{rem}
  we find that the character $\psi$ on the computed basis corresponds
  to $\chi_3$, where $\{\chi_i\}$ is the dual basis of quadratic characters.
\item The extended basis is
  $\{\psi,\chi_1,\chi_2,\chi_4,\chi_5,\chi_6\}$. Computing some prime
  ideals, we find that the set $\{\bar{\id{p}}_3,
  \id{p}_5,\bar{\id{p}}_{29},\id{p}_{31},\id{p}_{47}\}$
  has the desired properties (using Remark \ref{rem} we know that the
  primes with inertial degree $3$ are the ones on the third
  case). On distinguishing one ideal from its conjugate we follow the
  notation of \cite{lingham} for consistency (although it may not be
  the order of GP's output).
\item Table $7.1$ of \cite{lingham} shows that $\trace(\tilde{\rho}_f(\Frob_{\id{p}}))$ is
  odd on all such primes $\id{p}$.
\item The group of cubic characters has as dual basis for
  $Cl(\Om_K,\id{m}_K)$ the characters $\set{\chi_1,\chi_2}$,
  i.e. $\chi_i(v_j) = \delta_{i,j} \xi_3$, where $\xi_3$ is a
  primitive cubic root of unity and $\delta_{i,j}$ is Dirac's delta
  function. The ideals $\id{p}_3$ and $\id{p}_7$ are inert on the
  quadratic subextension of $L_\E$ and
\[
\langle(\log(\chi_1(\id{p}_3)),\log(\chi_2(\id{p}_3))),
  (\log(\chi_1(\id{p}_7)),\log(\chi_2(\id{p}_7)))\rangle =
  (\ZZ/3\ZZ)^2
\]
\item From table $7.1$ (of \cite{lingham}) we see that
  $\trace(\rho_f(\Frob_{\id{p}_3}))$ is even, hence $\tilde{\rho}_f$ has image $S_3$
  with the same quadratic subfield as $\tilde{\rho}_\E$.
\item The field $K_\E$ can be given by the equation $x^4 + 264*x^3 +
26896*x^2 + 1244416*x + 21958656$. The prime number $2$ is ramified on
$K_\E$, and factors as $2 \Om_{K_\E}= \id{p}_{2,1}^2 \id{p}_{2,2}$. The
prime number $13$ is also ramified and factors as $13 \Om_{K_\E} =
\id{p}_{13,1}^2 \id{p}_{13,2} \id{p}_{13,3}$. The prime number $23$ is
ramified, but has a unique ideal dividing it on $K_\E$. The modulus to
consider is $\id{m}_{K_\E} = \id{p}_{2,1}^5 \id{p}_{2,2}^3\id{p}_{13,1}
\id{p}_{13,2} \id{p}_{13,3} \id{p}_{23}$.
\item $Cl(\Om_{K_\E},\id{m}_{K_\E}) \simeq C_{792} \times C_{12} \times
  C_{12} \times C_{12} \times C_4 \times C_2 \times C_2 \times
  C_2$. We claim that $\psi_\E=\chi_1^2\chi_4$, where $\chi_i$ is the
  dual basis for cubic characters of $Cl(\Om_{K_\E},\id{m}_{K_\E})$. To
  prove this, we use Remark \ref{cannot happen}. We know that
  $\id{p}_3$ is inert on $K_\E$ and $\psi_\E(\id{p}_3)=1$. The prime
  number $7$ is inert on $K_\E$ hence $\psi_\E(\id{p}_7)=1$; the prime
  $37$ is inert in $K$, but splits as a product of two ideals on
  $K_\E$. Then $\psi_\E(\id{p}_{37})=1$ on both ideals. There is a unique
  (up to squares) character vanishing on them, and this is $\psi_\E$.

The basis $\{\psi_\E, \chi_1,\chi_2,\chi_3\}$ extends $\{\psi_\E\}$ to a basis of
  cubic characters. The point here is that the characters
  $\chi_i$ need not give Galois extensions over $K$. A character gives
  a Galois extension if and only if its modulo is invariant under the
  action of $\Gal(K_\E/K)$. The characters $\chi_1,\chi_3,\chi_4$ do
  satisfy this property, hence the subgroup of cubic characters of
  $Cl(\Om_{K_\E},\id{m}_{K_\E})$ with invariant conductor has rank $3$. A
  basis is given by $\{\psi_\E,\chi_1,\chi_3\}$. If we evaluate
  $\chi_1$ and $\chi_3$ at the prime above $\id{p}_3$ and $\id{p}_7$
  we see that they span the $\ZZ/3\ZZ$-module. We already
  compared the residual traces on these ideals, hence the two residual
  representations are indeed isomorphic.
\item We compute an equation for $L_\E$ over $\QQ$. From the ideal
  factorizations $2 \Om_{L_\E} = \id{q}_{2,1}^2
  \id{q}_{2,2}^2\id{q}_{2,3}^2\id{q}_{2,4}^3$, $13 \Om_{L_\E} =
  \id{q}_{13,1}^2
  \id{q}_{13,2}^2\id{q}_{13,3}^2\id{q}_{13,4}\id{q}_{13,5}$ and $23
  \Om_{L_\E} = \id{q}_{23,1}^2 \id{q}_{23,2}^2 \id{q}_{23,3}^2$ we take 
\begin{center}
$\id{m}_{L_\E} = \id{q}_{2,1}^5 \id{q}_{2,2}^5 \id{q}_{2,3}^5\id{q}_{2,4}^7 
\id{q}_{13,1} \id{q}_{13,2}\id{q}_{13,3}\id{q}_{13,4}\id{q}_{13,5}\id{q}_{23,1} \id{q}_{23,2} \id{q}_{23,3}$
 \end{center}
as the modulus and compute the ray class group
$Cl(\Om_{L_\E},m_{L_\E})$. It has $18$ generators (see the {\it GP
  Code} section).
\item We compute the Galois group $\Gal(L_\E/K)$, and chose an order $3$
  and an order $2$ elements from it.
\item We compute the kernels of the system and find out that
  the kernel for the order $3$ element has dimension $8$.
\item The kernel for the order $2$ element has dimension $11$. 
\item The intersection of the previous two subspaces has dimension
  $6$. It is generated by the characters
\begin{center}
  $\{\chi_1, \chi_2\chi_5, \chi_2\chi_3\chi_6\chi_7,
  \chi_3\chi_4\chi_9, \chi_{12}\chi_{13}\chi_{14},
  \chi_8\chi_{10}\chi_{12}\chi_{15}\chi_{17}\}$.
\end{center}
\item The ideals above $\{3,5,11,29,31\}$ satisfy that their
logarithms span the $\ZZ/2\ZZ$ vector space.
\item If we look at table $7.1$ of \cite{lingham}, we found that the ideal above
  $11$ is missing since it has norm $121$, but we can replace it by
  the ideals above $47$. So we checked that the two representations
  agree on order $6$ elements.
\item The space of elements satisfying the condition on the order $3$
  element has dimension $10$. 
\item The intersection of the two subspaces has dimension $5$. A basis
  is given by the characters
\begin{center}
  $\{\chi_1\chi_2\chi_4,\chi_1\chi_2\chi_6,\chi_3\chi_{10}\chi_{11}\chi_{14},\chi_3\chi_{16},
  \chi_1\chi_{10}\chi_{11}\chi_{12}\chi_{13}\chi_{17}\}$. 
\end{center}
\item The prime ideals above $\set{3,7,19,29,31}$ do satisfy the
  condition, but since the prime $19$ is inert on $K$, its norm is
  bigger than $50$. Nevertheless, we can replace it by the primes
  above $41$ which are on Table $7.1$ of \cite{lingham}.
\item Looking at table $7.1$ of \cite{lingham} we find that the two
  representations are indeed isomorphic.
\item From the same table we see that the factors at the primes
  $\bar{\id{p}}_2$, $\id{p}_2$, $\id{p}_{23}$, $\bar{\id{p}}_{13}$ and $\id{p}_{13}$ also agree hence the
  two $L$-series are the same.
\end{enumerate}

If the stronger version of Theorem \ref{Harris-Taylor} saying that the
level of the Galois representaion equals the level of the automorphic
form is true, the set of primes to consider can be diminished removing
the primes above $37$ on the second set of primes.

\subsection{Trivial residual image or image isomorphic to $C_2$}

Let $\E$ be the elliptic curve over $K$ with equation
\[ \E: y^2+\omega xy=x^3-x^2-(\omega+6)x.\]
According to ~\cite{lingham}, the conductor of $\E$ is
$\id{p}_2 \overline{\id{p}}_3$, where $\id{p}_2=\langle 2,\omega\rangle$ and
$\id{p}_3=\langle 3,-1+\omega\rangle$.

Using the routine {\tt setofprimes}, we find that the set 
\[\{5,7,11,13,17,19,29,31,41,47,59,61,67,71,83,89,97,101,127,131,139,151,163,\]
\[179,197,211,233,239,277,311,349,353,397,439,443,739,1061,1481\}\]
is enough for checking modularity. We will confirm that this is indeed
the case.  

\begin{enumerate}
\item The primes above $29$ and $41$ prove that the residual
  representation of the automorphic form lies on $\GL_2(\FF_2)$,
  because the values of $a_{\bfP_{29}},a_{\overline{\bfP_{29}}},
  a_{\bfP_{41}}$ and $a_{\overline{\bfP_{41}}}$ are $6,6,-2,2$
  respectively. Then degree $4$ extension of $\QQ_2$ has equation $x^4
  + 8x^3 + 144x^2 + 512x + 896$, and it is totally ramified.
\item The modulus is $\id{m}_K=2^3 3^2\sqrt{-23}$, and the ray class
group $Cl(\Om_K,\id{m}_K)\simeq C_{198}\times C_6\times C_2\times
C_2\times C_2\times C_2$.

\item $-\; (4)$ There are 64 quadratic (including the trivial)
extensions of $K$ with conductor dividing $\id{m}_K$. We calculate
each one, with the corresponding ray class group described in the
algorithm; we pick a basis of cubic characters of each group, and
evaluate them at each prime in $\{5,7,11,13,17,19,29,31\}$.  We find
that this set is indeed enough for proving whether $\tilde{\rho}_f$ has
residual image trivial or isomorphic to $C_2$. \setcounter{enumi}{4}

\item Since $\trace(\rho_f(\Frob_{\id{p}})) \equiv 0 \pmod 2$ for
  the primes on the previous set (see \cite{lingham} table $7.1$) we get that
  the residual image is trivial or isomorphic to $C_2$.

\item$-\; (7)$ The set
\[\{5,7,13,29,31,41,47,59,61,67,71,83,89,97,101,127,131,139,151,163,\]
\[179,197,211,233,239,277,311,349,353,397,439,443,739,1061,1481\}\]
is enough. In order to see this, we must check that the Frobenius at
all the primes of $K$ above these ones cover $\Gal(K_S/K)\setminus
\{\identity\}$.  We calculate a basis
$\{\psi_1,\psi_2,\psi_3,\psi_4,\psi_5\}$ for the quadratic characters
of $Cl(\Om_K,\id{m}_K)$, and compute, for $\id{p}$ a prime of $K$ above one
of these primes, $(\log\psi_1(\id{p}),\dots,\log\psi_5(\id{p}))$. We simply
check that this set of coordinates has 63 elements, so the primes we
listed are enough. \setcounter{enumi}{7}

\item The primes listed are not on Table $7.1$ of \cite{lingham}. We
 asked Professor John Cremona to compute the missing values so as
  to prove modularity for this curve.
\end{enumerate}

If the stronger version of Theorem \ref{Harris-Taylor} saying that the
level of the Galois representaion equals the level of the automorphic
form is true, the set of primes to consider can be diminished to the
primes above $\{5, 7, 11, 13, 17, 23, 29, 31, 47, 59, 71, 101, 131\}$.

\subsection{Image isomorphic to $C_3$}

Let $\E$ be the elliptic curve with equation
\[
\E: y^2 = x^3 - \omega x^2 + (4 \omega -1)x-5.
\]
This curve is taken from Table $7.3$ of \cite{lingham}. It has
conductor $\id{p}_2^2 \bar{\id{p}}_2^3$, where $\id{p}_2 = \<2,\omega>$ and
$\bar{\id{p}}_2 = \<2,1+\omega>$.
There is an automorphic form $f$ (denoted $f_4$ on \cite{lingham})
which has level $\id{n} = \id{p}_2^2 \bar{\id{p}}_2^3$ and is the
candidate to correspond to $\E$. From Section $2$ we know that $f$ has
a Galois $2$-adic representation $\rho_f$ attached to it, whose
$L$-series agree at all primes with the possible exceptions
$\id{p}_2, \bar{\id{p}}_2$ and $\id{p}_{23}$, where $\id{p}_{23}$ is the
unique ideal or norm $23$ on $K$. Let $\rho_\E$ denote the $2$-adic
Galois representation attached to $\E$. Its residual representation has
image isomorphic to $C_3$ as can be easily proved by computing the
extension $L_\E$ of $K$ obtained adjoining the $2$-torsion points
coordinates.

Using the GP routine {\tt setofprimes} (which can be downloaded from
\cite{CNT}), we find that the set of primes of $\QQ[\sqrt{-23}]$ above
\[\{3, 5, 7, 11, 13, 17, 19, 29, 31, 41, 47, 53, 59, 73, 79, 83,
  89, 101, 131, 167, 173, 191, 211,\]
\[ 223, 241, 271, 307, 317, 347, 421,
  463, 593, 599, 607, 617, 691, 809, 821, 853, 877, 883,\]
\[ 997, 1151,
  1481, 1553, 1613, 1789, 1871, 2027, 2339, 2347, 2423, 2693, 3571,
  4831\}\] is enough for proving that the residual representations
are isomorphic and that the $2$-adic representations are isomorphic as
well.

To prove that the result is correct, we apply the algorithm described on 
Section \ref{c3-case}
\begin{enumerate}
\item The primes above $59$ and $173$ are enough to prove that the
  residual representation of the automorphic form lies on
  $\GL_2(\FF_2)$. The values of $a_{\bfP_{59}},a_{\overline{\bfP_{59}}},$
  $a_{\bfP_{173}}$ and $a_{\overline{\bfP_{173}}}$ are $-12,4,-6,10$
  respectively. The Frobenius polinomials of the first primes split in
  $\Q_2$, hence the Galois representations lies on a quadratic
  extension of $\Q_2$. And $2$ ramifies for the second primes, as claimed.
\item Since $2$ is unramified on $K/\QQ$, the modulus is $\id{m}_K=
  2^3.\sqrt{-23}$. We compute this ray class group and find that
  $Cl(\Om_K,\id{m}_K) \simeq C_{66} \times C_2 \times C_2 \times C_2$.
\item There is a unique (up to squares) order three character,
  hence a unique cubic extension of $K$ unramified outside $\id{m}_K$ so
  it corresponds to $L_\E$.
\item Let $\set{\chi_1,\ldots,\chi_4}$ be a set of generators of the
  order two characters of $Cl(\Om_K,\id{m}_K)$ with respect to the
  previous isomorphism. By computing their values at prime
  ideals of $\Om_K$ we found that the set
  $C=\set{\id{p}_3,\bar{\id{p}}_3,\id{p}_{13},\bar{\id{p}}_{13}}$ satisfies
  the desired properties.
\item The traces of the Frobenius at these primes are odd (see
\cite{lingham} table $7.1$).
\item Since there are no other order three characters, we have that
  $\rho_\E \simeq \rho_f$.
\item As in the previous example, Livne's method implies that the
  primes above the primes in the set\\
$\{3, 5, 7, 11, 13, 17, 19, 29, 31, 41, 47, 53, 59, 73, 79, 83, 89,
101, 131, 167, 173, $\\
$191,211, 223, 241, 271, 307, 317, 347, 421, 463,
593, 599, 607, 617, 691, 809, $\\
$821, 853, 877, 883, 997, 1151, 1481,
1553, 1613, 1789, 1871, 2027, 2339, 2347, $\\
$2423, 2693, 3571, 4831 \}$ 

are enough to prove modularity.
\item Most of the primes listed are not on Table $7.1$ of
  \cite{lingham}. We asked Professor John Cremona to compute the
  missing values so as to prove modularity for this curve.
\end{enumerate}
\begin{remark} This case is rather special, since the extension $L$ is
  Galois over $\Q$. In particular the residual representation $\tilde \rho_\E$
  and $\tilde \rho_{\bar{\E}}$ (where bar denotes conjugation) are
  isomorphic. This allows working with extensions over $\QQ$ and
  avoid working with relative Galois extensions.
\end{remark}

\section{GP Code}

In this section we show how to compute the previous examples with our
routines and the outputs.

\subsection{Image $S_3$}

\begin{verbatim}
? read(routines);
? K=bnfinit(w^2-w+6);
? Setofprimes(K,[w,1-w,1,-1,0],[2,13])
Case = S_3
Class group of K:  [396, 12, 2, 2, 2, 2]
Primes for discarding other quadratic extensions: [3, 5, 11, 29, 31]
Primes discarding C_3 case:  [3, 7]
The ray class group for K_E is  [792, 12, 12, 12, 4, 2, 2, 2]
Cubic character on K_E basis:  [0; 0; 0; 1]
Primes proving C_3 extension of K_E:  [3, 7, 37]
Class group of L:  [2376, 12, 12, 12, 4, 4, 4, 4, 4, 2, 2, 2, 2, 2, 
2, 2, 2, 2]
%3 = [3, 5, 7, 11, 19, 29, 31, 37]
\end{verbatim}

\subsection{Image isomorphic to $C_2$ or trivial}
\begin{verbatim}
? read(routines);
? K=bnfinit(w^2-w+6);
Case = C_2 or trivial
Primes for proving that the residual representation
	lies on F_2:  [29, 41]
Class group of K:  [198, 6, 2, 2, 2, 2]
There are 64 subgroups of Cl_K of index <= 2
Primes proving C_2 or trivial case [5, 7, 11, 13, 17, 19, 29, 31]
Livne's method output:[5, 7, 11, 13, 17, 19, 29, 31, 37, 41, 47, 
59, 71, 97, 101, 127, 131, 139, 151, 163, 179, 197, 211, 233, 239, 
277, 311, 349, 353, 397, 439, 443, 739, 1061, 1481]
%3 = [5, 7, 11, 13, 17, 19, 29, 31, 37, 41, 47, 59, 71, 97, 101, 
127, 131, 139, 151, 163, 179, 197, 211, 233, 239, 277, 311, 349, 
353, 397, 439, 443, 739, 1061, 1481]
\end{verbatim}

\subsection{Trivial residual image or image isomorphic to $C_3$}

\begin{verbatim}
? read(routines);
? K=bnfinit(w^2-w+6);
? Setofprimes(K,[0,-w,0,4*w-1,-5],[2])
Case = C_3
Primes for proving that the residual representation
	lies on F_2:  [59, 173]
Class group of K:  [66, 2, 2, 2]
Primes proving C_3 image:  [59, 173, 3, 13]
Cubic character on K basis:  [;]
Primes proving C_3 extension of K_E:  []
Livne's method output:[3, 5, 7, 11, 13, 17, 19, 29, 31, 41, 47, 
53, 59, 73, 79, 83, 89, 101, 131, 167, 173, 191, 211, 223, 241, 
271, 307, 317, 347, 421, 463, 593, 599, 607, 617, 691, 809, 821, 
853, 877, 883, 997, 1151, 1481, 1553, 1613, 1789, 1871, 2027, 
2339, 2347, 2423, 2693, 3571, 4831]
%3 = [3, 5, 7, 11, 13, 17, 19, 29, 31, 41, 47, 53, 59, 73, 79, 83, 
89, 101, 131, 167, 173, 191, 211, 223, 241, 271, 307, 317, 347, 
421, 463, 593, 599, 607, 617, 691, 809, 821, 853, 877, 883, 997, 
1151, 1481, 1553, 1613, 1789, 1871, 2027, 2339, 2347, 2423, 2693, 
3571, 4831]
\end{verbatim}

\bibliographystyle{alpha}
\bibliography{al}
\end{document}